\numberwithin{equation}{section}
\newtheorem{thm}{Theorem}[section]
\newtheorem{lma}[thm]{Lemma}
\newtheorem{defn}[thm]{Definition}
\newtheorem{prop}[thm]{Proposition}
\renewcommand{\ge}{\geqslant}
\renewcommand{\le}{\leqslant}
\renewcommand{\geq}{\geqslant}
\renewcommand{\leq}{\leqslant}
\renewcommand{\H}{\text{H}}
\renewcommand{\P}{\text{P}}
\newcommand{\ol}{\overline}
\newcommand{\e}{\varepsilon}
\newcommand{\ut}{\underline{t}}
\title{On the dimensions of a family of overlapping self-affine carpets}
\author{Jonathan M. Fraser$^1$ and Pablo Shmerkin$^2$}
\begin{document}
\maketitle

\begin{center}

\emph{Dedicated to the memory of Dave Broomhead.}
\\ \vspace{7mm}
  $^1$School of Mathematics, The University of Manchester, Manchester, M13 9PL, UK.\\  E-mail: jonathan.fraser@manchester.ac.uk \\ \vspace{3mm} $^2$Department of Mathematics and Statistics, Torcuato Di Tella University, Av. Figueroa Alcorta 7350, Buenos Aires, Argentina. E-mail: pshmerkin@utdt.edu
\end{center}

\begin{abstract}
We consider the dimensions of a family of self-affine sets related to the Bedford-McMullen carpets.  In particular, we fix a Bedford-McMullen system and then randomise the translation vectors with the stipulation that the column structure is preserved.  As such, we maintain one of the key features in the Bedford-McMullen set up in that alignment causes the dimensions to drop from the affinity dimension.  We compute the Hausdorff, packing and box dimensions outside of a small set of exceptional translations, and also for some explicit translations even in the presence of overlapping. Our results rely on, and can be seen as a partial extension of, M. Hochman's recent work on the dimensions of self-similar sets and measures.\\

\emph{Mathematics Subject Classification} 2010:  primary: 28A80 \\

\emph{Key words and phrases}: Self-affine carpet, Hausdorff dimension, packing dimension, box dimension, overlaps
\end{abstract}

\section{Introduction}

\subsection{Self-affine sets and carpets}

The dimension theory of self-affine sets has attracted a great deal of attention in the literature over the past 30 years.  There are two key starting points which have led to two thriving and complementary strands of research.  The `generic case' studies general self-affine sets by randomising the translation vectors in the defining iterated function system in an appropriate way and then making \emph{almost sure} statements about the corresponding attractors.  This approach began with Falconer's seminal paper \cite{affine} in 1988, which introduced the \emph{affinity dimension} as a sure upper bound for the upper box dimension of any self-affine set and if the translation vectors are randomised and the norms of the defining matrices are strictly smaller than 1/2, then the Hausdorff, box and packing dimensions of the attractor are all almost surely equal to the affinity dimension. Some articles following this approach are \cite{solomyakmeasureanddim, falconergeneralized, shmerkinoverlapping, randomselfaffine, falconermiao, jordanjurga}. In contrast, the `specific approach' focuses on special classes of self-affine sets designed in a way to facilitate calculations and allows \emph{sure} statements to be made about the attractors.  This began with the work of Bedford \cite{bedford} and McMullen \cite{mcmullen} from 1985 which introduced self-affine carpets and computed their Hausdorff and box dimensions. Of particular note is that these values are typically different and strictly less than the affinity dimension. This second approach was further developed in \cite{lalley-gatz,baranski,fengaffine, fraserboxlike} among others. This paper has two main purposes. On one hand, we blend the two approaches in a natural context.  We begin with a Bedford-McMullen carpet and then randomise the translations whilst maintaining the key structural feature: the column alignment.  This will be elaborated on in the following section. On the other hand, we wanted to illustrate how a recent breakthrough of Hochman \cite{hochman} on the dimensions of self-similar sets and measures can also be applied to obtain analogous results for self-affine sets. We obtain formulae for the Hausdorff, box and packing dimensions valid outside of a small set of parameters, with two points of interest being that the values of the dimension are typically \emph{different} from each other and from the affinity dimension, and our class contains many \emph{overlapping} self-affine sets.

\subsection{Our setting}

Fix positive integers $n>m>1$ and divide the unit square into a uniform $m \times n$ grid.  The grid rectangles can now be labelled in a natural way as $D_0 = \{(i,j): i = 1, \dots, m, j=1, \dots, n\}$.  Choose a non-empty subset $D \subseteq D_0$ and for each $(i,j) \in D$, let $S_{(i,j)}$ denote the affine contraction which maps the unit square onto the rectangle indexed by $(i,j)$ defined by
\[
S_{(i,j)}(x,y) = \big(x/m+(i-1)/m, \ y/n + (j-1)/n \big).
\]
Together, the maps $\{S_{(i,j)}\}_{(i,j) \in D}$ form an iterated function system (IFS) and it is well-known that there exists a unique non-empty compact set $F$ satisfying
\[
F = \bigcup_{(i,j) \in D} S_{(i,j)}(F).
\]
This set $F$ is called the attractor of the IFS and this class of attractors was first studied in the mid-eighties independently by Bedford \cite{bedford} and McMullen \cite{mcmullen}, who each gave a formula for the Hausdorff and box dimensions.  Such sets are now known as Bedford-McMullen carpets.  We wish to consider the following generalisation.  For a given Bedford-McMullen system, we randomise the horizontal translates, whilst keeping the column structure intact, i.e., we always assume that if two rectangles are in the same column initially, then they are translated horizontally by the same amount.  See Figures \ref{fig:pattern} and \ref{fig:carpets}. The advantage of this approach is that because we keep some alignment in the construction, even though we randomise the system, the `typical' dimensions are still exceptional (we will see that in fact they are the same as the dimensions of the original system).  Thus we provide a smoothly parametrised family of potentially overlapping self-affine carpets whose dimensions are strictly less than the affinity dimension.

\begin{figure}[H]
	\centering
	\includegraphics[width=100mm]{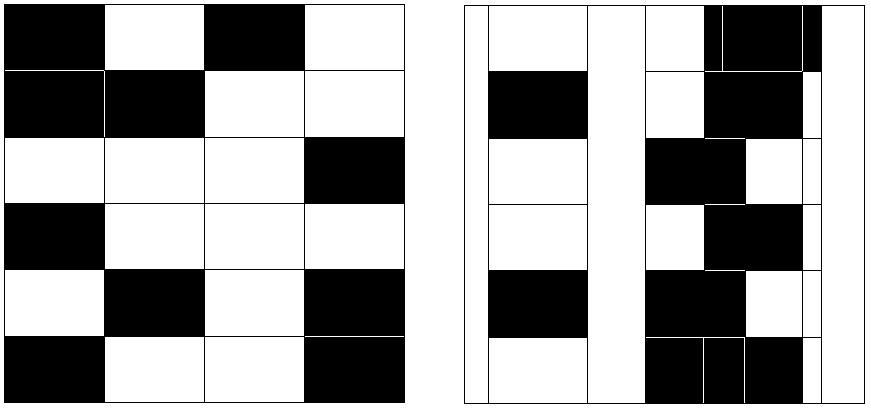}
\caption{A generating pattern for a Bedford-McMullen carpet (on the left) and a translation of the columns into an overlapping pattern (on the right).  In this case, $m=4$ and $n=6$.}  \label{fig:pattern}
\end{figure}

More formally, let $\ol{D} = \{i \in\{1,\dots, m\} : (i,j) \in D \text{ for some $j$}\}$ be the projection of $D$ onto the first co-ordinate.  To each $i \in \ol{D}$ we associate a `random translation'
$t_i \in [0,1-1/m]$ and for a given set of translates $\underline{t} = (t_i)_{i \in \ol{D}} \in [0,1-1/m]^{\ol{D}}$ we define a new IFS consisting of the maps
\[
S_{(i,j), \underline{t}}(x,y) \ = \ (x/m, y/n) \ + \ (t_i, (j-1)/n)
\]
and denote the attractor, which of course depends on $\underline{t}$, by $F_{\underline{t}}$.
In the case where $t_i = (i-1)/m$ for all $i \in \ol{D}$, then we recover the original Bedford-McMullen system. The restriction that $t_i\in [0,1-1/m]$ is meant to ensure the attractor is a subset of the unit square, and it is not essential.

We now wish to make statements about the dimensions of $F_{\underline{t}}$ in terms of the parameters $\underline{t} \in [0,1-1/m]^{\ol{D}}$.

\begin{figure}[H]
	\centering
	\includegraphics[width=100mm]{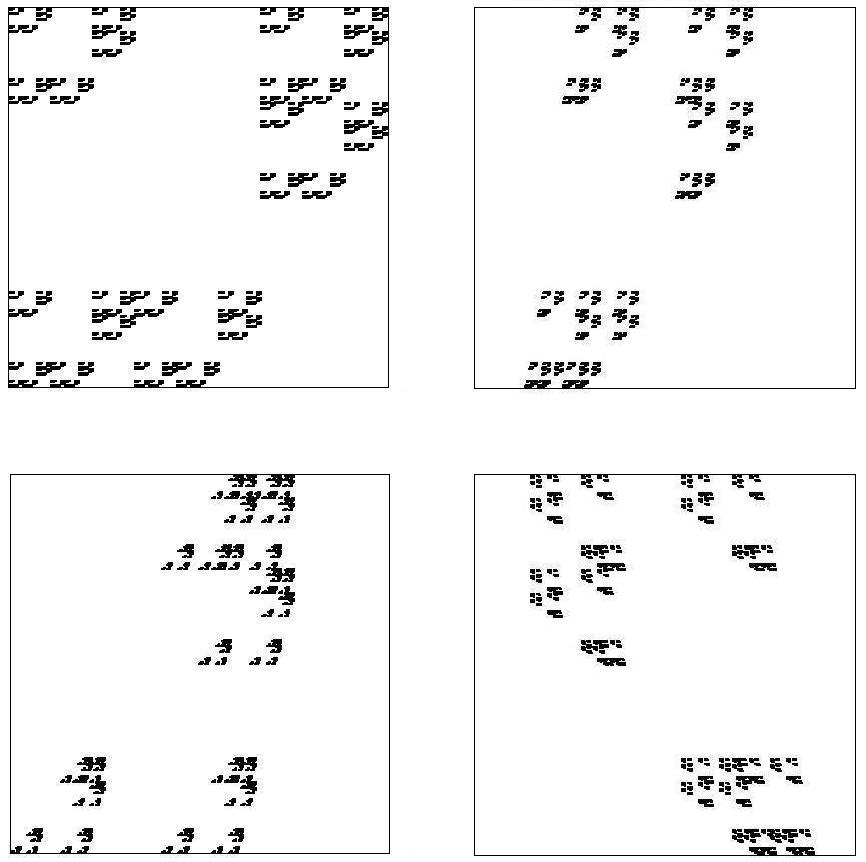}
\caption{Four self-affine carpets based on the same Bedford-McMullen system, but with the columns translated in different ways.  The original Bedford-McMullen carpet is on the top left.  In each case $m=3$ and $n=4$.} \label{fig:carpets}
\end{figure}

\section{Results}

In this section we state our main results on the dimensions of the self-affine sets described in the previous section.  We write $\dim_\H$, $\dim_\text{B}$ and $\dim_\P$ to denote the Hausdorff, box and packing dimensions respectively. Recall that an IFS $\{ S_1,\ldots, S_k\}$ is said to have an exact overlap if the semigroup generated by the $S_i$ is not free.   We write $N_i = \lvert\{j = 1, \dots, n: (i,j) \in D\}\rvert$ for the number of chosen rectangles in the $i$th column.

\begin{thm} \label{mainH} There exists a set $E\subset  [0,1-1/m]^{ \ol{D}}$ of Hausdorff and packing dimension $\lvert \ol{D}\rvert-1$ (in particular of zero $\lvert \ol{D}\rvert$-dimensional Lebesgue measure) such that
\begin{alignat*}{3}
 \dim_\text{\emph{H}} F_{\underline{t}} &= \frac{\log \sum_{i=1}^m N_i^{\log m/ \log n}}{\log m} && \quad\text{if }\underline{t}\in[0,1-1/m]^{ \ol{D}}\setminus E,\\
\dim_\text{\emph{H}} F_{\underline{t}} &< \frac{\log \sum_{i=1}^m N_i^{\log m/ \log n}}{\log m} && \quad\text{if }\underline{t}\in E.
\end{alignat*}
Moreover, if $\underline{t}$ is algebraic and the IFS $\{ x/m+t_i\}_{i\in \ol{D}}$ does not have an exact overlap, then $\underline{t}\notin E$.

\end{thm}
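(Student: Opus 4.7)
The plan is to combine a Bedford--McMullen-style covering upper bound with a Ledrappier--Young-style measure-theoretic lower bound, using Hochman's theorem to control the horizontal projection. Set $s = \log m/\log n$ and $Z = \sum_i N_i^s$, so the target value is $\log Z/\log m$.

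For the upper bound, I would cover each level-$k$ symbolic cylinder (a rectangle of dimensions $m^{-k}\times n^{-k}$) by approximate squares of side $n^{-k}$. Since the horizontal projection of $F_{\underline{t}}$ restricted to such a cylinder is an affine copy of $\Lambda_{\underline{t}} := \pi(F_{\underline{t}})$, the number of approximate squares needed per cylinder is $\lesssim (n/m)^{k\,d(\underline{t})}$, where $d(\underline{t}) := \overline{\dim}_\text{B}\Lambda_{\underline{t}}$. Summing over the $|D|^k$ symbolic cylinders (an overestimate when columns overlap) yields
\[
\dim_\H F_{\underline{t}} \leq \dim_\text{B} F_{\underline{t}} \leq \log_n |D| + (1-s)\,d(\underline{t}).
\]
Since $\Lambda_{\underline{t}}$ is the attractor of the self-similar IFS $\Phi_{\underline{t}} := \{x/m + t_i\}_{i \in \ol{D}}$, the inequality $d(\underline{t}) \leq \min\{1, \log|\ol{D}|/\log m\}$ produces the universal upper bound $\log Z/\log m$; crucially, any strict drop in $d(\underline{t})$ produces a strict drop in the upper bound.

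For the matching lower bound, I would take the Bernoulli measure $\mu_{\underline{t}}$ on $F_{\underline{t}}$ that assigns probability $N_i^{s-1}/Z$ to each digit $(i,j)\in D$. A direct calculation shows that its horizontal projection is the self-similar measure $\nu_{\underline{t}}$ of $\Phi_{\underline{t}}$ driven by the column weights $q_i = N_i^s/Z$. By Hochman's theorem there is a set $E_0 \subset [0,1-1/m]^{\ol{D}}$ of Hausdorff and packing dimension at most $|\ol{D}|-1$ off which $\dim_\H \nu_{\underline{t}}$ attains its maximal value $\min\{1, H(q)/\log m\}$; moreover, $E_0$ avoids every algebraic $\underline{t}$ for which $\Phi_{\underline{t}}$ has no exact overlap. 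A local-dimension computation for $\mu_{\underline{t}}$ at approximate squares then upgrades Hochman's estimate for $\nu_{\underline{t}}$ to $\dim_\H \mu_{\underline{t}} \geq \log Z/\log m$ on $E_0^c$, hence the same lower bound for $\dim_\H F_{\underline{t}}$.

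Setting $E := E_0$, the upper and lower bounds coincide on $E^c$, yielding the stated equality. On $E$, Hochman's theorem forces $d(\underline{t})$ to drop strictly below $\min\{1, \log|\ol{D}|/\log m\}$, so the upper bound displayed above gives $\dim_\H F_{\underline{t}} < \log Z/\log m$, which is the stated strict inequality; the algebraic case is inherited directly from the corresponding assertion of Hochman applied to $\Phi_{\underline{t}}$. The main obstacle is the local-dimension step for $\mu_{\underline{t}}$: one must show that $\mu_{\underline{t}}(Q)$ at a typical approximate square $Q$ of side $n^{-k}$ splits cleanly into a Bedford--McMullen-type entropy contribution from the $y$-digits (unaffected by the horizontal translations thanks to the preserved column alignment) and a $\nu_{\underline{t}}$-mass of a horizontal interval of length $n^{-k}$. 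Arguing this so that the overlap effect is felt only through the latter term, which is exactly what Hochman controls, is the delicate heart of the proof.
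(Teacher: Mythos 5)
Your upper bound does not actually reach the target value. Assigning the same number of approximate squares, roughly $(n/m)^{k\,d(\ut)}$, to each of the $|D|^k$ level-$k$ cylinders and summing gives the exponent $\log|D|/\log n+(1-s)d(\ut)$, which at $d(\ut)=\log|\ol{D}|/\log m$ equals $\log|\ol{D}|/\log m+\log(|D|/|\ol{D}|)/\log n$ --- the \emph{box} dimension value of Theorem \ref{mainB}, not the Hausdorff target $\log Z/\log m$. By concavity of $x\mapsto x^s$ these coincide only when all the $N_i$ are equal, so for non-uniform vertical fibres your bound is strictly too large. The sharp Hausdorff upper bound needs McMullen's \emph{non-uniform} covering, in which different columns are covered with different efficiency; the paper's point is precisely that this argument is symbolic and column-by-column, hence unaffected by overlapping columns. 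The strict inequality on $E$ fails for two further reasons: Hochman's theorem is one-directional and asserts nothing about a dimension drop \emph{on} the super-exponential-concentration set $E_0$, so you cannot set $E:=E_0$ and deduce $\dim_\H F_{\ut}<\log Z/\log m$ there; and even if $d(\ut)$ did drop, feeding it into a bound whose value at maximal $d(\ut)$ already exceeds $\log Z/\log m$ proves nothing. The paper instead takes $E$ to be the set where the dimension actually drops, obtains $\dim E\le|\ol{D}|-1$ from the inclusion $E\subseteq E_0$ (a consequence of the lower bound), and obtains $\dim E\ge|\ol{D}|-1$ from an explicit computation on the hyperplanes $\{t_{i_1}=t_{i_2}\}$, where merging two columns strictly decreases $\sum_i N_i^{s}$ because $(N_{i_1}+N_{i_2})^{\gamma}<N_{i_1}^{\gamma}+N_{i_2}^{\gamma}$ for $\gamma\in(0,1)$.

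For the lower bound, the step you yourself flag as the ``delicate heart'' is a genuine gap, not a technicality: factorising $\mu_{\ut}(Q)$ over an approximate square into a vertical entropy term times $\nu_{\ut}$ of an interval is a Ledrappier--Young-type exact-dimensionality statement for overlapping self-affine measures, and Hochman only controls $\nu_{\ut}$ at the level of dimension, not of individual interval masses; when distinct symbolic columns project onto overlapping intervals, mass from many cylinders piles into the same square and the clean splitting is exactly what must be proved. The paper circumvents this entirely: it bounds $\dim_\H F_{\ut}$ below by $\dim_\H(\ol{\mu}\circ\ol{\Pi}_{\ut}^{-1})$ (Lemma \ref{keylem1}, via Hochman) plus the almost-sure dimension of the vertical slices (Lemma \ref{keylem2}, computed as a $1$-variable random self-similar set), glued together by Marstrand's slice theorem (Lemma \ref{mst}). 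Either supply the missing Ledrappier--Young formula or adopt the slicing route.
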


We will prove Theorem \ref{mainH} in Section \ref{proofH}. The main idea, which was inspired by results of Jordan \cite{jordan} and Jordan and Pollicott \cite{jordanpollicott}, is to use Marstrand's slicing theorem to bound the dimension from below by the sum of the dimension of the projection, and the dimension of a typical slice; see Section \ref{proofH} for further discussion.

\begin{thm} \label{mainB}
There exists a set $E\subset  [0,1-1/m]^{ \ol{D}}$ of Hausdorff and packing dimension $\lvert \ol{D}\rvert-1$ (in particular of zero $\lvert \ol{D}\rvert$-dimensional Lebesgue measure) such that
\begin{alignat*}{4}
 \dim_\text{\emph{B}} F_{\underline{t}} &=  \dim_\text{\emph{P}}  F_{\underline{t}} &= \frac{\log \lvert \ol{D} \rvert}{\log m} + \frac{\log \lvert D \rvert / \lvert \ol{D} \rvert}{\log n} & \quad\text{if }\underline{t}\in[0,1-1/m]^{ \ol{D}}\setminus E,\\
 \dim_\text{\emph{B}} F_{\underline{t}} &=  \dim_\text{\emph{P}} F_{\underline{t}} &<  \frac{\log \lvert \ol{D} \rvert}{\log m} + \frac{\log \lvert D \rvert / \lvert \ol{D} \rvert}{\log n} & \quad\text{if }\underline{t}\in E.
\end{alignat*}
Moreover, if $\underline{t}$ is algebraic and the IFS $\{ x/m+t_i\}_{i\in \ol{D}}$ does not have an exact overlap, then $\underline{t}\notin E$.
\end{thm}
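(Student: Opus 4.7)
The approach is to count approximate squares at scale $n^{-p}$, which are naturally parameterised by ``mixed cylinders'' at levels $(p, L_p)$ with $L_p = \lceil p \log n/\log m\rceil$, chosen so that $m^{-L_p} \approx n^{-p}$. A mixed cylinder is the image of a composition $S_{(i_1, j_1), \underline{t}} \circ \cdots \circ S_{(i_p, j_p), \underline{t}}$ followed by $L_p - p$ additional horizontal contractions $x \mapsto x/m + t_{i_\ell}$ with $i_\ell \in \overline{D}$; its image is an $m^{-L_p} \times n^{-p}$ rectangle. There are exactly $\lvert D\rvert^p \lvert\overline{D}\rvert^{L_p - p}$ of them, giving the unconditional bound $N(F_{\underline{t}}, n^{-p}) \le \lvert D\rvert^p \lvert\overline{D}\rvert^{L_p - p}$ and hence $\dim_\text{B} F_{\underline{t}} \le s$ for every $\underline{t}$, where $s$ denotes the claimed dimension.

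Next, I would invoke Hochman's theorem for the self-similar IFS $\Phi_{\underline{t}} = \{x/m + t_i : i \in \overline{D}\}$, which governs the horizontal projection $\pi(F_{\underline{t}})$. Let $E_0$ be Hochman's exceptional set: $\underline{t} \notin E_0$ guarantees that distinct level-$L$ compositions of $\Phi_{\underline{t}}$ are $m^{-L - o(L)}$-separated, $\dim_\H E_0$ and $\dim_\P E_0$ are at most $\lvert\overline{D}\rvert - 1$, and algebraic $\underline{t}$ lacking exact overlaps lie outside $E_0$. For $\underline{t} \notin E_0$ the matching lower bound proceeds by partitioning mixed cylinders according to their $y$-address $\mathbf{j} = (j_1, \ldots, j_p)$: the $x$-coordinates $\sum_{\ell = 1}^{L_p} t_{i_\ell}/m^{\ell - 1}$ arising in row $\mathbf{j}$ form a subset of level-$L_p$ compositions of $\Phi_{\underline{t}}$, so the separation estimate forces at most $2^{o(L_p)}$ coincidences per $m^{-L_p}$-interval. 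Summing the resulting per-row distinct counts $\prod_\ell \lvert R_{j_\ell}\rvert \cdot \lvert\overline{D}\rvert^{L_p - p}/2^{o(L_p)}$ (where $R_j = \{i : (i, j) \in D\}$) over the $y$-disjoint rows yields $N(F_{\underline{t}}, n^{-p}) \ge \lvert D \rvert^p \lvert\overline{D}\rvert^{L_p - p}/2^{o(L_p)}$ and so $\dim_\text{B} F_{\underline{t}} \ge s$.

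Defining $E := \{\underline{t} : \dim_\text{B} F_{\underline{t}} < s\}$, the lower bound shows $E \subseteq E_0$, hence $\dim_\H E, \dim_\P E \le \lvert\overline{D}\rvert - 1$, and the algebraic statement inherits directly from Hochman's version for $E_0$; the strict inequality on $E$ is then automatic. For the identity $\dim_\P F_{\underline{t}} = \dim_\text{B} F_{\underline{t}}$, I would use self-affinity: every open ball meeting $F_{\underline{t}}$ contains a bi-Lipschitz copy of $F_{\underline{t}}$ (embed a sufficiently deep cylinder image), so $\dim_\text{B}(B \cap F_{\underline{t}}) = \dim_\text{B} F_{\underline{t}}$ uniformly for all $B$ meeting the attractor, which is enough to force the packing dimension to coincide with the upper box dimension.

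The main technical obstacle is controlling the row-wise distinctness count: the constraint $i_\ell \in R_{j_\ell}$ for $\ell \le p$ restricts the $x$-coordinates to a $\mathbf{j}$-dependent subfamily of level-$L_p$ compositions of $\Phi_{\underline{t}}$, so it is not enough merely to know that $\pi(F_{\underline{t}})$ itself carries many $m^{-L_p}$-boxes. The saving grace is that Hochman's exponential separation is a pointwise property of $\Phi_{\underline{t}}$ and so applies verbatim to any subset of level-$L_p$ compositions; this lets us sum row contributions without rerunning the Hochman machinery for each $\mathbf{j}$, and it makes the $2^{o(L_p)}$ overhead uniform enough to be absorbed when dividing by $\log n^p$.
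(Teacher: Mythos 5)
Your upper bound, the reduction of packing to upper box dimension, and the general architecture (use Hochman for the projection, then count approximate squares) are all consistent with the paper. But the lower bound contains a genuine gap: you assert that $\underline{t}\notin E_0$ guarantees distinct level-$L$ compositions of $\{x/m+t_i\}_{i\in\ol{D}}$ are $m^{-L-o(L)}$-separated. The absence of super-exponential concentration of cylinders only gives $\Delta_L\ge \e^L$ for some $\e>0$ that has nothing to do with $1/m$ (and, strictly, only along a subsequence of $L$). If $\e\ll 1/m$, the number of distinct compositions that can coincide to within $m^{-L_p}$ in a single row is of order $(\e m)^{-L_p}$, which is exponentially large, not $2^{o(L_p)}$, and your per-row distinctness count collapses. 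A telltale sign that the assumed separation is too strong: if distinct compositions really were $m^{-L-o(L)}$-separated, the box dimension of $\pi(F_{\underline{t}})$ would follow by direct counting and Hochman's inverse entropy theorem would be superfluous; the entire point of that theorem is to extract full dimension from the much weaker exponential-separation hypothesis, as a dimension statement rather than a cylinder-counting statement.

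The paper avoids exactly this trap. It never counts separated cylinders directly from $\Delta_L$; instead it (i) passes to a subsystem $H_k\subseteq D^{\theta(k)}$ with \emph{uniform vertical fibres} and $|H_k|^{1/k}\to|D|$, so that every projected word has the same number of lifts; (ii) applies Hochman's theorem as a black box to conclude the attractor of the projected subsystem $\ol{H}_k$ has box dimension equal to its similarity dimension; and (iii) uses a Vitali-covering extraction (Lemma \ref{approxwithin}) at a deep iteration level $\ell$ to produce a genuinely SSC subfamily $\ol{G}_{k,\ell}$ of near-maximal cardinality, which is then lifted back to the plane using the uniform fibres. The row-by-row concentration problem you flag as "the main technical obstacle" is real, and it is resolved by this uniform-fibre-plus-extraction device, not by a pointwise separation property of $\Phi_{\underline{t}}$. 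A smaller omission: the theorem asserts $\dim E=|\ol{D}|-1$, and with your definition $E=\{\underline{t}:\dim_{\text{B}}F_{\underline{t}}<s\}$ you only obtain the upper bound via $E\subseteq E_0$; the matching lower bound comes from observing that $t_{i_1}=t_{i_2}$ merges two columns and strictly decreases the value of the formula because $m<n$.
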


We will prove Theorem \ref{mainB} in Section \ref{proofB}. Note that we say $\underline{t} = \{t_i\}_{i \in \ol{D}}$ is algebraic if all of the $t_i$ are algebraic.  A generalisation of our main results is discussed in Section \ref{subsecgeneralisation} below.  We note that the exceptional set $E$ in both cases is contained in the set for which ``super exponential concentration of cylinders'' occurs in the vertical projection; see Section \ref{sec:super-exponential} below.  However, we cannot guarantee that the exceptional set $E$ is precisely the same in both theorems.

We underline that the dimensions appearing in Theorems \ref{mainH} and \ref{mainB} are the same as the dimensions of the original carpet as proved by Bedford and McMullen. We recall that (for the unperturbed carpets) the box counting dimension is obtained by covering each rectangle in the $n$-th stage of the construction by the same number of disks of the appropriate size, independently of each other. When allowing covers by disks of different sizes, it is usually more efficient to cover large collections of parallel rectangles by a single disk, leading to the expression for Hausdorff dimension (which also has a variational interpretation as the supremum of Hausdorff dimensions of Bernoulli measures for the natural Markov partition of the carpet). Our results suggest that this geometric picture typically persists when the columns are allowed to overlap.

We finish this section by commenting on the relation between these results and previous work on the subject. The ``hybrid'' approach to the dimension of self-affine sets was undertaken previously in \cite{jordanpollicott, barany}. These papers give a formula, valid for Lebesgue almost all parameters, for certain (different) families of parametrized self-affine carpets. A point in common with our model is that these typical dimensions are strictly less than the affinity dimension. However, in \cite{jordanpollicott} only Hausdorff dimension was considered, while the results of \cite{barany} are for box-counting dimension and involve non-overlapping self-affine sets. Also, because we use Hochman's recent results (which were not available to the authors of \cite{jordanpollicott, barany}), we obtain far better information about the exceptional set.

\section{Symbolic notation}

Most of the proofs in the subsequent sections are symbolic in nature, and thus rely more on the combinatorics of the symbolic spaces $D^{\mathbb{N}}$ and $\ol{D}^{\mathbb{N}}$ than the geometry of the corresponding fractals. In Section \ref{sec:super-exponential} we review the results of Hochman which will allow us to pass from the symbolic information back to the geometric setting. In this section we briefly summarise some notation we will use throughout the rest of the paper.

Define the vertical projection $\pi:[0,1]^2 \to [0,1]$ by $\pi(x,y) = x$. As is often the case for self-affine carpets, this projection will play a key role. For $i\in\ol{D}$ and $\underline{t}\in [0,1-1/m]^{\ol{D}}$, we will denote $\ol{S}_{i,\underline{t}}(x)=\tfrac{1}{m}x+t_i$. Note that the IFS $\{ \ol{S}_{i,\underline{t}}\}_{i\in\ol{D}}$ generates the projection $\pi F_{\underline{t}}$.

Given $\lambda=(\lambda_1,\ldots,\lambda_k) = ((i_1,j_1),\ldots,(i_k,j_k))\in D^k$, and $\rho=(i_1,\ldots,i_k)\in\ol{D}^k$, we denote
\begin{align*}
S_{\lambda, \underline{t}} &= S_{(i_1,j_1), \underline{t}} \circ \cdots \circ S_{(i_k,j_k), \underline{t}},\\
\ol{S}_{\rho,\underline{t}} &= \ol{S}_{i_1,\underline{t}} \circ\cdots\circ \ol{S}_{i_k,\underline{t}}.
\end{align*}

Rather than define all the notions of dimension we are interested in, we simply refer the reader to \cite[Chapters 2-3]{falconer}.  The key properties of Hausdorff and packing dimensions which we need will be discussed in the following sections when required.  We now recall the definition of box dimension.  The lower and upper box dimensions of a bounded set $F \subseteq \mathbb{R}^n$ are defined by
\[
\underline{\dim}_\text{B} F = \liminf_{r \to 0} \, \frac{\log N_r (F)}{-\log r}
\qquad
\text{and}
\qquad
\overline{\dim}_\text{B} F = \limsup_{r \to 0} \,  \frac{\log N_r (F)}{-\log r},
\]
respectively, where $N_r (F)$ is the smallest number of sets required for a $r$-cover of $F$. Here an $r$-cover of $F$ is a finite (or countable) collection of open sets $\{U_k\}_k$ with the property that the diameter of each set $\lvert U_k \rvert \leq r$ and $F \subseteq \cup_k U_k$. If $\underline{\dim}_\text{B} F = \overline{\dim}_\text{B} F$, then we call the common value the box dimension of $F$ and denote it by $\dim_\text{B} F$.  It is useful to note that we can replace $N_r$ with several different definitions all based on covering or packing the set at scale $r$, see \cite[Section 3.1]{falconer}.  For example, it can be the number of cubes in an $r$-grid which intersect $F$.

\section{Super-exponential concentration of cylinders and the dimensions of self-similar sets and measures}
\label{sec:super-exponential}

In this section we recall a recent result of Hochman \cite{hochman} on the dimensions of self-similar measures. We consider only the special case of \emph{homogeneous} self-similar measures, which is all we will require. To this end, consider an affine IFS of the form $\mathcal{I}=\{ S_i(x)=a x+t_i\}_{i\in A}$, where $A$ is a finite index set and $a \in (0,1)$ is a fixed contraction rate and the maps $S_i$ are assumed to act on $\mathbb{R}$.

\begin{defn}
We say that the IFS $\mathcal{I}=\{ S_i(x)= a x+t_i\}_{i\in A}$ has \emph{super-exponential concentration of cylinders} if $-\log\Delta_k/k\to\infty$ (with the convention $\log 0=-\infty$), where
\[
\Delta_k =  \min_{\rho\neq \rho'\in A^k} | S_\rho(0)-S_{\rho'}(0)|,
\]
and, as usual, $S_\rho(x) =  S_{i_1}\circ\cdots\circ S_{i_k}$ if $\rho=(i_1,\ldots,i_k) \in A^k$.
\end{defn}

In other words, $\mathcal{I}$ has super-exponential concentration of cylinders if the distance between cylinders of level $k$ coming from different codes decreases faster than any power as a function of $k$. The only known mechanism by which super-exponential concentration of cylinders can occur is the presence of exact overlaps; by definition, this means that $\Delta_k=0$ for some $k$. One observation that will be useful later is that if $\mathcal{I}$ does not have super-exponential concentration of cylinders, then the same is true for any IFS which is obtained by first iterating all the maps in $\mathcal{I}$ a fixed number of times, and then dropping some of the maps.

The following is the key result of Hochman that we will require. Recall that the Hausdorff dimension of a Borel probability measure $\nu$ is defined by
\[
\dim_\H \nu = \inf \{ \dim_\H F :  \text{ $F$ is a Borel set with $\nu(F) = 1$ } \}.
\]

\begin{thm}[{\cite[Theorem 1.1]{hochman}}] \label{thm:hochman}
Suppose the IFS $\mathcal{I}=\{ S_i(x)= a x+t_i\}_{i\in A}$ does \emph{not} have super-exponential concentration of cylinders. Let $p=(p_i)_{i\in A}$ be a probability vector, and let $\nu$ be the self-similar measure associated to the IFS $\mathcal{I}$ and the vector $p$, that is, the unique Borel probability measure satisfying
\[
\nu = \sum_{i\in A} p_i\, \nu\circ S_i^{-1},
\]
Then
\[
\dim_\text{\emph{H}} \nu = \min\left(\frac{\sum_{i\in A} p_i \log p_i}{\log a},1\right).
\]
In particular, if $F$ is the invariant set for the IFS $\mathcal{I}$, that is, the only nonempty compact set satisfying $F=\bigcup_{i\in A} S_i(F)$, then
\[
\dim_\text{\emph{H}} F= \min\left(\frac{\log |A|}{\log(1/a)},1\right).
\]
\end{thm}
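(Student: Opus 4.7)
The plan is to follow Hochman's entropy-based strategy. Since self-similar measures are exact dimensional (Feng--Hu), the Hausdorff dimension of $\nu$ equals its entropy dimension: if $\mathcal{D}_k$ is the partition of $\mathbb{R}$ into intervals of length $a^k$ and $H_k(\nu) := H(\nu,\mathcal{D}_k)$ is the Shannon entropy, then
\[
\dim_\H \nu \;=\; \lim_{k \to \infty} \frac{H_k(\nu)}{k \log(1/a)}.
\]
Iterating the self-similarity relation yields $\nu = \nu^{(k)} * T_{a^k}\nu$, where $T_{a^k}$ denotes scaling by $a^k$ and
\[
\nu^{(k)} \;=\; \sum_{\rho \in A^k} p_\rho\, \delta_{S_\rho(0)}, \qquad p_\rho = \prod_{j=1}^{k} p_{i_j},
\]
is the ``symbolic'' distribution at level $k$. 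This is the central decomposition that converts the problem into an entropy-growth-under-convolution question.

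The upper bound $\dim_\H \nu \le \min(H(p)/\log(1/a),1)$ is the easy direction: concavity of Shannon entropy and the fact that $\nu^{(k)}$ has $|A|^k$ atoms of weights $p_\rho$ give $H_k(\nu) \le \min(k H(p), k\log(1/a)) + O(1)$, which yields both bounds simultaneously after dividing by $k\log(1/a)$.

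The hard direction is the matching lower bound, which I would establish by contradiction. Suppose that along some positive-density sequence of scales $k$, $H_k(\nu)$ were substantially smaller than $\min(k H(p),\,k\log(1/a))$. Then at those scales $\nu = \nu^{(k)} * T_{a^k}\nu$ gains very little entropy over $T_{a^k}\nu$ alone. Hochman's \emph{inverse theorem for entropy growth under convolution} on $\mathbb{R}$ would then force $\nu^{(k)}$ to be concentrated, up to small error, on exponentially few atoms at scale $a^k$. Since $\nu^{(k)}$ is supported on the $|A|^k$ points $\{S_\rho(0)\}_{\rho \in A^k}$ with weights coming from $p^{\otimes k}$, such concentration forces many distinct codes $\rho \neq \rho'$ to satisfy $|S_\rho(0) - S_{\rho'}(0)| \ll a^k$; quantifying this decay yields $-\log \Delta_k/k \to \infty$ along the bad subsequence, contradicting the hypothesis of no super-exponential concentration of cylinders.

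The main obstacle is the inverse theorem itself, which is the technical heart of Hochman's paper: a substantial additive-combinatorial statement for measures on the line, proved via a multi-scale entropy decomposition and a structural dichotomy for measures whose convolutions fail to increase entropy. I would invoke it as a black box rather than attempt to reprove it. Finally, the statement for the invariant set $F$ follows from the measure statement by choosing the uniform vector $p_i = 1/|A|$: this gives $\dim_\H F \ge \dim_\H \nu = \min(\log|A|/\log(1/a), 1)$, while the reverse inequality is the standard covering upper bound using the $|A|^k$ cylinders of level $k$, each of diameter $O(a^k)$.
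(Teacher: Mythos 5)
The paper offers no proof of this statement to compare against: it is quoted verbatim as \cite[Theorem 1.1]{hochman} and used as a black box throughout. Your sketch is a recognisable outline of Hochman's actual argument --- the reduction to entropy dimension via Feng--Hu exact dimensionality, the decomposition $\nu=\nu^{(k)}*T_{a^k}\nu$ (valid precisely because the system is homogeneous, so $S_\rho(x)=a^kx+S_\rho(0)$ for all $\rho\in A^k$), the easy entropy upper bound, and the appeal to the inverse theorem for the lower bound --- and deferring the inverse theorem is reasonable given that the paper defers the entire theorem.

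There is, however, a genuine gap in your final step. Concentration of $\nu^{(k)}$ \emph{at scale $a^k$} can only ever produce pairs $\rho\neq\rho'$ with $|S_\rho(0)-S_{\rho'}(0)|\lesssim a^k$, i.e.\ $\Delta_k\lesssim a^k$. That is exponential, not super-exponential, decay --- and it is nearly automatic anyway when $H(p)>\log(1/a)$, since $|A|^k$ atoms sit in a bounded interval. To contradict the hypothesis you need $\Delta_k\le a^{qk}$ for \emph{every} fixed $q$ once $k$ is large, and for that the inverse theorem must be applied across the whole block of scales from $a^k$ down to $a^{qk}$: assuming $\dim_\H\nu=\alpha<\min\left(1,H(p)/\log(1/a)\right)$, one shows that for each $q$ the entropy of $\nu^{(k)}$ with respect to the partition into intervals of length $a^{qk}$ remains bounded below its maximal value $kH(p)$ by a definite proportion, because $T_{a^k}\nu$ fails to be close to uniform on a positive fraction of the intermediate scales (this is exactly where $\alpha<1$ is used). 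Only this ``arbitrary depth $q$'' version forces atoms from distinct codes to coincide to within $a^{qk}$ for all $q$, which is what $-\log\Delta_k/k\to\infty$ means. A minor additional point: the bound $H_k(\nu)\le kH(p)+O(1)$ comes from almost-convexity of entropy under convex combinations, $H\left(\sum_\rho p_\rho\nu_\rho\right)\le\sum_\rho p_\rho H(\nu_\rho)+H(p^{\otimes k})$, not from concavity, which gives the reverse inequality.
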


The next result, which also follows from Hochman's work, tells us that super-exponential concentration of cylinders is a rare phenomenon - in a quantitative sense and in some special cases, as rare as exact overlaps.

\begin{prop} \label{rare-secc}
Let $A$ be a finite index set and fix $a\in (0,1/2)$.
\begin{enumerate}
\item The family of $(t_i)_{i\in A}$ such that the IFS $\mathcal{I}=\{ ax+t_i\}_{i\in A}$ has super-exponential concentration of cylinders has Hausdorff and packing dimension $|A|-1$.
\item If $a$ and $t_i, (i\in A)$, are all algebraic, then the IFS $\mathcal{I}=\{ ax+t_i\}_{i\in A}$ has super-exponential concentration of cylinders if and only if there is an exact overlap, that is, if and only if $\Delta_k=0$ for some $k$.
\end{enumerate}
\end{prop}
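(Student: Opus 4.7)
The plan is to prove the two parts separately. For part (2), the key observation is that for any $\rho,\rho'\in A^k$,
\[
S_\rho(0) - S_{\rho'}(0) \;=\; \sum_{j=1}^k a^{j-1}(t_{\rho_j} - t_{\rho'_j})
\]
lies in the fixed number field $K = \mathbb{Q}(a, \{t_i\}_{i\in A})$ and has logarithmic Weil height at most linear in $k$. By Liouville's inequality, whenever this element of $K$ is nonzero it is bounded below in absolute value by $c\,C^{-k}$ for constants $c,C>0$ depending only on $a$ and $\{t_i\}$. Consequently either $\Delta_k=0$ for some $k$ (an exact overlap) or $-\log\Delta_k/k$ remains uniformly bounded, which rules out SECC. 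The converse is immediate, since $\Delta_{k_0}=0$ implies $\Delta_j=0$ for every $j\ge k_0$.

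For part (1), the lower bound is cheap: fixing any pair $i\ne i'$ in $A$, the hyperplane $\{t_i=t_{i'}\}$ consists entirely of parameters with an exact overlap at level one, hence with SECC, and this hyperplane has both Hausdorff and packing dimension $|A|-1$.

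For the upper bound in part (1), I would exploit that $S_\rho(0)-S_{\rho'}(0)$ is a \emph{linear} function of the parameters $(t_i)_{i\in A}$. A short combinatorial check---isolating the first index at which $\rho$ and $\rho'$ differ and bounding the geometric tail using $a<1/2$---shows that for $\rho\ne\rho'$ the coefficient vector does not vanish, so $\{\Delta_k\le\epsilon\}$ is a union of at most $|A|^{2k}$ slabs of codimension one and width $O(\epsilon)$. Since SECC forces $\Delta_k\le k^{-k}$ for all sufficiently large $k$, the bad set $B$ satisfies
\[
B \;\subseteq\; \bigcup_{M\ge 1}\,\bigcap_{k\ge M}\bigl\{\Delta_k\le k^{-k}\bigr\} \;=:\; \bigcup_{M\ge 1} B_M,
\]
and each $B_M$ can be covered at the scale $r_k=k^{-k}$ by $O\bigl(|A|^{2k}k^{k(|A|-1)}\bigr)$ balls, giving $\overline{\dim}_\text{B} B_M\le |A|-1$. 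Countable stability of packing dimension then yields $\dim_\P B\le |A|-1$, and a fortiori $\dim_\H B\le |A|-1$.

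The main obstacle I anticipate is interpolating between the discrete scales $r_k=k^{-k}$ when computing the upper box dimension of $B_M$: it takes a brief monotonicity argument---using $(-\log r_{k+1})/(-\log r_k)\to 1$---to ensure intermediate scales do not inflate the count. Aside from this and the codimension check, both parts flow directly from the slab-covering viewpoint combined with Liouville's inequality.
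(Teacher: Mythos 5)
Your argument for part (1) is essentially the paper's: both rest on the linearity of $\ut\mapsto S_{\rho}(0)-S_{\rho'}(0)$ in the translations, the observation that isolating the first index where $\rho$ and $\rho'$ differ produces a coefficient of absolute value at least $a^{k}(1-a/(1-a))>0$ (this is exactly where $a<1/2$ enters), a covering of the resulting union of codimension-one slabs, and the passage from upper box dimension of countably many pieces to packing dimension. The only structural difference is cosmetic: the paper works at scales $\e^{k}$ for each fixed $\e$ and lets $\e\searrow 0$ at the end, while you work directly at the single super-exponential scale $k^{-k}$; both succeed, and your variant trades the limit in $\e$ for the interpolation between sparse scales that you correctly flag. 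One quantitative imprecision: the slab $\{|\Delta_{\rho,\rho'}|\le\e\}$ has width $O(\e a^{-k})$, not $O(\e)$, because the largest coefficient is only bounded below by $a^{k}(1-a/(1-a))$; this costs an extra factor $a^{-k}$ in your covering count, which is harmless since it is merely exponential in $k$ while your scale $k^{-k}$ is super-exponential, but the count $O(|A|^{2k}k^{k(|A|-1)})$ as written is not quite right. For part (2) the paper simply cites Hochman's Theorem 1.5, whereas you reprove it via Weil heights and Liouville's inequality --- which is in substance Hochman's own argument, so your write-up is self-contained where the paper's is a citation. Do note that the claim that the height is ``at most linear in $k$'' requires the polynomial-evaluation form of the height inequality (the quantity is a polynomial in $a$ and the $t_i$ of degree at most $k$ with coefficients in $\{-1,0,1\}$ and length $O(k)$); a naive term-by-term estimate over the sum $\sum_{j}a^{j-1}(t_{\rho_j}-t_{\rho'_j})$ would only give $O(k^{2})$, which would not rule out super-exponential concentration.
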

\begin{proof}
Let $E_0$ denote the set of parameters in a box $[-M,M]^{\lvert A \rvert}$ for which there is super-exponential concentration of cylinders. If $i\neq j\in A$, then $E_0$ contains the piece of hyperplane $\{ t_i=t_j\}\cap [-M,M]^{\lvert A \rvert}$, so $\dim_\H(E_0)\ge |A|-1$. The proof of the upper bound is similar to the proof of \cite[Theorem 1.8]{hochman} (in fact simpler because of the linearity of the projection map) and also follows from results in \cite{hochman2}, but we give the proof for completeness. Given a set of translations $\underline{t} = (t_i)_{i\in A}$, we write the maps in the corresponding IFS as $S_{i,\ut}=a x+t_i$ to emphasise dependence on $\underline{t}$.  Given two sequences $\rho = (i_1, \dots, i_n), \rho' = (j_1, \dots, j_n) \in A^n$, let $\Delta_{\rho, \rho'}: [-M,M]^{\lvert A \rvert}\to \mathbb{R}$ be the map
\[
\Delta_{\rho, \rho'}(\underline{t}) = S_{\rho,\ut}(0)-S_{\rho',\ut}(0),
\]
where $S_{\rho,\ut},S_{\rho',\ut}$ are the compositions of maps coming from the IFS $\{S_{i,\ut}=a x+t_i\}_{i\in A}$. It follows from the definition of super-exponential concentration that
\[
E_0 = \bigcap_{\varepsilon>0} \bigcup_{N=1}^\infty \bigcap_{n>N} \bigcup_{\rho \neq \rho'\in A^n} \Delta_{\rho,\rho'}^{-1}(-\e^n,\e^n).
\]
Since $\ut\to S_{\rho,\ut}(0)$ is linear, then so is $\Delta_{\rho,\rho'}$ for any $\rho,\rho' \in A^n$. If we write $\Delta_{\rho,\rho'}(\ut)= \sum_{\ell\in A} c_\ell t_\ell$, then the coefficients $c_\ell$ are given by
\[
c_\ell = \sum_{k\in \{1,\ldots,n\}: i_k=\ell} a^{k-1} - \sum_{k\in\{1,\ldots,n\}: j_k=\ell} a^{k-1}.
\]
Hence $|c_\ell|\le 1/(1-a)$ for all $\ell$ and, if $\rho \neq \rho'$, there is $\ell$ such that $|c_\ell|\ge a^n(1-a/(1-a))$; this is positive since $a\in (0,1/2)$. These considerations imply that for any distinct $\rho,\rho' \in A^n$, the set $\Delta_{\rho,\rho'}^{-1}(-\e^n,\e^n)$ is contained in the $C_a(\e/a)^n$-neighborhood of the hyperplane $\Delta_{\rho,\rho'}=0$, where $C_a$ is a constant independent of $n$ and $\e$. Hence $\cup_{\rho \neq \rho'\in A^n} \Delta_{\rho,\rho'}^{-1}(-\e^n,\e^n)$ can be covered by $C_{a,|A|,M}|A|^{2n} (a/\e)^{n(|A|-1)}$ balls of radius $(\e/a)^n$, where $C_{a,|A|,M}$ is a constant depending only on $a,|A|$ and $M$, and in particular, not on $n$ or $\e$. It follows that for $N \geq 1$
\[
\ol{\dim}_{\text{B}}\left(\bigcap_{n>N} \bigcup_{\rho \neq \rho' \in A^n} \Delta_{\rho,\rho'}^{-1}(-\e^n,\e^n)\right) \le |A|-1+\frac{2\log|A|}{\log(a/\e)}.
\]
We recall that one characterization of the packing dimension of a set $E$ is $\dim_{\P}(E) = \inf\{\sup_i\ol{\dim}_{\text{B}}(E_i): E\subseteq \cup_i E_i\}$, see \cite[Proposition 3.8]{falconer}. Therefore the above implies that
\begin{align*}
\dim_\P E_0 &\le \lim_{\e\searrow 0} \dim_\P\left( \bigcup_{N=1}^\infty \bigcap_{n>N} \bigcup_{\rho \neq \rho'\in A^n} \Delta_{\rho,\rho'}^{-1}(-\e^n,\e^n)\right) \\
&\le \lim_{\e\searrow 0}  \left( |A|-1+\frac{2\log|A|}{\log(a/\e)} \right) = |A|-1,
\end{align*}
which yields the first claim since $M$ was arbitrary.

The second part of the proposition is \cite[Theorem 1.5]{hochman}.
\end{proof}

\section{Calculation of the Hausdorff dimension} \label{proofH}

In this section we prove Theorem \ref{mainH}, which gives the Hausdorff dimension of $F_{\underline{t}}$ outside of a small exceptional set of $\underline{t}$.  The proof relies on Marstrand's slice theorem and being able to control two things: the Hausdorff dimension of a particular self-similar measure supported on $\pi(F_{\underline{t}})$; and, for this measure, the almost sure Hausdorff dimension of the vertical slices through points in $\pi(F_{\underline{t}})$.  We borrow the slicing idea from the works of Jordan \cite{jordan} and Jordan and Pollicott \cite{jordanpollicott}, where the Hausdorff dimension of overlapping Sierpi\'nski gaskets and carpets was considered.  In particular, Jordan and Pollicott \cite[Section 6.2]{jordanpollicott} find the Hausdorff dimension of certain overlapping carpets of Bedford-McMullen type, for almost all values of the parameter in a certain interval; however, in their work the parameter determines the contraction ratios, while we work with fixed contractions and vary the translations.

Throughout this section let
\[
s = \frac{\log \sum_{i=1}^m N_i^{\log m/ \log n}}{\log m}
\]
be the target Hausdorff dimension.

Let $\mu$ be the McMullen measure on the symbolic space $D^\mathbb{N}$, i.e., the Bernoulli measure with weights
\[
p_{(i,j)} = N_i^{\log m/ \log n-1}/m^s
\]
and let $\ol{\mu} = \mu \circ \pi^{-1}$ be the natural projection of $\mu$ onto $\ol{D}^\mathbb{N}$, where in a slight abuse of notation we let $\pi$ denote projection onto the first coordinate in both the symbolic and geometric spaces.  In particular, $\ol{\mu}$ is a Bernoulli measure with weights
\[
p_{i} =  N_i^{\log m/ \log n}/m^s.
\]
Let $\Pi_{\underline{t}}$ denote the natural coding map from $D^\mathbb{N}$ to $F_{\underline{t}}$ and $\ol{\Pi}_{\underline{t}}$ denote the natural coding map from $\ol{D}^\mathbb{N}$ to $\pi(F_{\underline{t}})$, the projection of $F_{\underline{t}}$ onto the horizontal axis. Note that $\ol{\mu} \circ \ol{\Pi}_{\underline{t}}^{-1}$ is nothing else than the self-similar measure for the IFS $\{ \ol{S}_{i,\underline{t}}  \}_{i\in \ol{D}}$ with weights $(p_i)_{i\in \ol{D}}$. The following is then immediate from Theorem \ref{thm:hochman} and Proposition \ref{rare-secc}.

\begin{lma} \label{keylem1}

Suppose $m\ge 3$. Let $E_0$ be the set of parameters $\underline{t}\in  [0,1-1/m]^{\ol{D}}$ such that the IFS $\{ \ol{S}_{i,\underline{t}}\}_{i\in \ol{D}}$ has super-exponential concentration of cylinders. Then $E_0$ has Hausdorff and packing dimension $|\ol{D}|-1$. Moreover, if $\underline{t}\in [0,1-1/m]^{\ol{D}}\setminus E_0$, then
\begin{equation} \label{keylem1eq}
\dim_\text{\emph{H}} \big( \ol{\mu} \circ \ol{\Pi}_{\underline{t}}^{-1} \big) \ =  \ - \frac{\sum_{i \in \ol{D}} p_i \log p_i}{\log m}
\end{equation}
Furthermore, if $\underline{t}$ is algebraic and the IFS $\{ \ol{S}_{i,\underline{t}}\}_{i\in \ol{D}}$  does not have an exact overlap, then $\underline{t}\notin E_0$.
\end{lma}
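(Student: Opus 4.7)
The plan is to recognise that this lemma is essentially a direct specialisation of Theorem \ref{thm:hochman} and Proposition \ref{rare-secc} to the one-dimensional homogeneous self-similar IFS $\{\ol{S}_{i,\underline{t}}\}_{i\in\ol{D}}$, which has common contraction ratio $a=1/m$. The hypothesis $m\ge 3$ is included precisely so that $a=1/m\in(0,1/2)$, making Proposition \ref{rare-secc} directly applicable.

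For the first assertion, I would apply part 1 of Proposition \ref{rare-secc} with $A=\ol{D}$ and $a=1/m$. That proposition is stated for translations in a box $[-M,M]^{|A|}$, but the upper bound $\dim_\P\le|\ol{D}|-1$ restricts to the sub-box $[0,1-1/m]^{\ol{D}}\subseteq [-M,M]^{|\ol{D}|}$ for any $M\ge 1-1/m$, giving the upper bound for $E_0$ in both Hausdorff and packing dimension. For the matching lower bound, I would observe (as in the opening of the proof of Proposition \ref{rare-secc}) that for any fixed $i\neq j\in\ol{D}$ the hyperplane $\{t_i=t_j\}$ is contained in $E_0$, because if two translates coincide then two distinct codes give the same cylinder and so $\Delta_k=0$. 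Intersecting this hyperplane with $[0,1-1/m]^{\ol{D}}$ produces an $(|\ol{D}|-1)$-dimensional subset of $E_0$.

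For the main formula \eqref{keylem1eq}, note that by construction $\ol{\mu}\circ\ol{\Pi}_{\underline{t}}^{-1}$ is the self-similar measure associated with the IFS $\{\ol{S}_{i,\underline{t}}\}_{i\in\ol{D}}$ and the weight vector $(p_i)_{i\in\ol{D}}$ with $p_i=N_i^{\log m/\log n}/m^s$. For $\underline{t}\notin E_0$ the IFS has no super-exponential concentration of cylinders, so Theorem \ref{thm:hochman} applies and gives
\[
\dim_\H\bigl(\ol{\mu}\circ\ol{\Pi}_{\underline{t}}^{-1}\bigr) = \min\left(-\frac{\sum_{i\in\ol{D}}p_i\log p_i}{\log m},\,1\right).
\]
To match \eqref{keylem1eq} I only need to verify that the minimum is attained by the first term, which is the content of the elementary entropy inequality $-\sum_{i\in\ol{D}} p_i\log p_i\le \log|\ol{D}|\le\log m$.

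The final statement, that algebraic $\underline{t}$ with no exact overlap lies outside $E_0$, is a direct quotation of part 2 of Proposition \ref{rare-secc} (the contraction $a=1/m$ is rational, hence algebraic). In short, there is no genuine obstacle here: the only non-cosmetic item is the entropy bound needed to remove the minimum in Hochman's formula, and the only structural point is checking the hypothesis $a\in(0,1/2)$, which is exactly why $m\ge 3$ is assumed.
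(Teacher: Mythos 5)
Your proposal is correct and follows essentially the same route as the paper, which simply declares the lemma ``immediate from Theorem \ref{thm:hochman} and Proposition \ref{rare-secc}''; you have merely made explicit the details the authors leave implicit (the role of $m\ge 3$ in ensuring $a=1/m<1/2$, the restriction of the exceptional set to the sub-box, and the entropy bound $-\sum_{i\in\ol{D}}p_i\log p_i\le\log\lvert\ol{D}\rvert\le\log m$ that removes the minimum in Hochman's formula).
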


For $x \in [0,1]$ let $L_x = \{(x,y) : y \in \mathbb{R}\}$ be the vertical line through the point $(x,0)$.

\begin{lma}  \label{keylem2}
For any $\underline{t} \in [0,1-1/m]^{\ol{D}}$ we have
\[
\dim_\text{\emph{H}} L_x \cap F_{\underline{t}} \ \geq \ \sum_{i \in \ol{D}} \, p_i \,  \frac{\log N_i}{\log n}
\]
for $ \ol{\mu} \circ \ol{\Pi}_{\underline{t}}^{-1}$ almost all $x \in \pi(F_{\underline{t}})$.
\end{lma}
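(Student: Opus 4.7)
The plan is to exploit the column-preserving structure: for every coding $\rho = (i_1, i_2, \ldots) \in \ol{D}^\mathbb{N}$ of $x = \ol{\Pi}_{\underline{t}}(\rho)$, the slice $L_x \cap F_{\underline{t}}$ contains an explicit Moran-type Cantor set $C_\rho \subseteq [0,1]$ placed on the vertical fibre over $x$, and the strong law of large numbers pins down its Hausdorff dimension for $\ol{\mu}$-typical $\rho$.

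Iterating the maps yields
\[
\Pi_{\underline{t}}(\lambda) \ = \ \left( \sum_{\ell=1}^\infty \frac{t_{i_\ell}}{m^{\ell-1}}, \ \sum_{\ell=1}^\infty \frac{j_\ell-1}{n^\ell} \right),
\]
so the $y$-coordinate depends only on $(j_\ell)$, not on $\underline{t}$ or on $(i_\ell)$. Setting $J_i := \{j : (i,j)\in D\}$ (so $|J_i| = N_i$) and fixing $\rho$ as above, the set
\[
C_\rho \ := \ \left\{ \sum_{\ell=1}^\infty \frac{j_\ell - 1}{n^\ell} : j_\ell \in J_{i_\ell} \right\}
\]
satisfies $\{x\} \times C_\rho \subseteq L_x \cap F_{\underline{t}}$. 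At level $k$, $C_\rho$ is contained in a union of $\prod_{\ell=1}^k N_{i_\ell}$ pairwise disjoint closed intervals of common length $n^{-k}$. A standard mass distribution argument applied to the natural measure $\nu_\rho$ that assigns mass $1/\prod_{\ell \leq k} N_{i_\ell}$ to each such level-$k$ interval yields
\[
\dim_\H C_\rho \ \geq \ \liminf_{k \to \infty} \frac{1}{k \log n} \sum_{\ell=1}^k \log N_{i_\ell}.
\]

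Since $\ol{\mu}$ is Bernoulli with weights $p_i$, the random variable $\rho \mapsto \log N_{i_1}$ is bounded with $\ol{\mu}$-mean $\sum_{i \in \ol{D}} p_i \log N_i$, so the strong law makes the Ces\`aro average above converge $\ol{\mu}$-a.s.\ to $\sum_i p_i \log N_i / \log n$ on a full-$\ol{\mu}$-measure set $G \subseteq \ol{D}^\mathbb{N}$. Because $\ol{\Pi}_{\underline{t}}^{-1}(\ol{\Pi}_{\underline{t}}(G)) \supseteq G$, the set of $x \in \pi(F_{\underline{t}})$ possessing at least one preimage in $G$ has full $\ol{\mu} \circ \ol{\Pi}_{\underline{t}}^{-1}$-measure, and for each such $x$ the inclusion $\{x\} \times C_\rho \subseteq L_x \cap F_{\underline{t}}$ produces the claimed lower bound on $\dim_\H(L_x \cap F_{\underline{t}})$. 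The only mildly delicate step is the mass distribution estimate for the inhomogeneous Moran set $C_\rho$, since the branching numbers $N_{i_\ell}$ vary with $\ell$ and no self-similar Hausdorff dimension formula applies directly; however, the level-$k$ constituents being pairwise disjoint intervals of equal length makes the bound $\nu_\rho(B) \lesssim \prod_{\ell \leq k} N_{i_\ell}^{-1}$ for $|B| \sim n^{-k}$ immediate, after which the SLLN delivers $\nu_\rho(B) \lesssim |B|^\beta$ for any $\beta < \sum_i p_i \log N_i / \log n$ in the standard way.
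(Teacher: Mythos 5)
Your proposal is correct, and it rests on the same geometric observation as the paper: for a coding $\rho=(i_1,i_2,\dots)$ of $x$, the fibre $L_x\cap F_{\underline{t}}$ contains a Cantor set obtained by running, at step $\ell$, the vertical IFS of the column $i_\ell$ (your $C_\rho$ is exactly the paper's $\bigcap_k\bigcup_{j_1,\dots,j_k}S_{(i_1,j_1),\underline{t}}\circ\cdots\circ S_{(i_k,j_k),\underline{t}}([0,1]^2)\cap L_x$, and your remark that it does not depend on $\underline{t}$ is the reason the lemma holds for \emph{every} $\underline{t}$). Where you diverge is in how the dimension of this fibre set is computed: the paper treats it as a realisation of a $1$-variable random self-similar set and quotes the dimension results of Barnsley--Hutchinson--Stenflo \cite{vvariable}, whereas you give a self-contained argument via the mass distribution principle for the inhomogeneous Moran set $C_\rho$ followed by the strong law of large numbers for the i.i.d.\ bounded variables $\log N_{i_\ell}$. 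Both are sound; the citation route is shorter, while your route is elementary and makes transparent exactly which full-measure set of codings $G$ is used and how it pushes forward to a full-measure set of $x$ (modulo the routine measurability of $\ol{\Pi}_{\underline{t}}(G)$, handled by an outer-measure remark). Two cosmetic points: adjacent level-$k$ $n$-adic intervals can share endpoints, so ``pairwise disjoint'' should be ``with pairwise disjoint interiors'' (a ball of diameter comparable to $n^{-k}$ then meets $O(1)$ of them, which your $\lesssim$ already absorbs); and since the SLLN gives an actual limit, the $\liminf$ in your intermediate bound is a genuine limit on the full-measure set, which is all you need.
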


\begin{proof}
Let $x  = \ol{\Pi}_{\underline{t}}(\mathbf{i}) \in \pi(F_{\underline{t}})$ for some $\mathbf{i} = (i_1, i_2, \dots) \in \ol{D}^{\mathbb{N}}$.  It is straightforward to see that $L_x \cap F_{\underline{t}}$ contains the set
\[
\bigcap_{k \in \mathbb{N}} \bigcup_{\substack{j_1 : (i_1,j_1) \in D \\ \vdots \\ j_k : (i_k,j_k) \in D}} S_{(i_1,j_1), \underline{t}} \circ \cdots \circ S_{(i_k,j_k), \underline{t}} \big( [0,1]^2\big) \cap L_x
\]
which is a particular realisation of a 1-variable random self-similar set in the sense of Barnsley-Hutchinson-Stenflo (see e.g. \cite{vvariable}) where the deterministic IFSs used are the natural IFSs of similarities induced by the columns in our construction.  A realisation of a 1-variable random constructions corresponds to a particular infinite sequence over the set of deterministic IFSs and the above example is given by the sequence $\mathbf{i}$.  We can apply the dimension results in \cite[Section 4]{vvariable} with weights $\{p_i\}$ to obtain that such 1-variable random self-similar sets have dimension
\[
\sum_{i \in \ol{D}} \, p_i \,  \frac{\log N_i}{\log n}
\]
for $\ol \mu$ almost all $\mathbf{i} \in \ol{D}^{\mathbb{N}}$, which completes the proof.
\end{proof}

We will use the following version of Marstrand's slice theorem, which follows from, for example, \cite[Corollary 7.12]{falconer}.

\begin{lma} \label{mst}
Let $F \subseteq \mathbb{R}^2$ and let $\nu$ be a Borel probability measure with support in $\mathbb{R}$.  If $\dim_\text{\emph{H}} (F \cap L_x) \geq s$ for $\nu$ almost all $x$, then $\dim_\text{\emph{H}} F \geq s + \dim_\text{\emph{H}} \nu$.
\end{lma}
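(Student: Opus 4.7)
The plan is to combine Frostman's lemma applied to $\nu$ with Frostman's lemma applied to the slices, and to integrate the latter against the former to build a measure on $F$ whose Frostman exponent factorises. The conclusion then follows from the mass distribution principle. Throughout, fix $s' < s$ and $t' < \dim_\H \nu$; I aim to show $\dim_\H F \geq s' + t'$, after which letting $s' \nearrow s$ and $t' \nearrow \dim_\H \nu$ completes the proof.

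First, since $\dim_\H \nu > t'$, Frostman's lemma produces a nonzero finite Borel measure $\nu'$, absolutely continuous with respect to $\nu$ and supported on the set $\{x : \dim_\H(F \cap L_x) \geq s\}$ (which has full $\nu$-measure by hypothesis), satisfying $\nu'(B(x,r)) \leq C_1 r^{t'}$ for all $x \in \mathbb{R}$ and $r > 0$. Next, for each $x$ in the support of $\nu'$ we have $\dim_\H(F \cap L_x) \geq s > s'$, so a second application of Frostman's lemma to the closed slice provides a probability measure $\mu_x$ supported on $F \cap L_x$ with $\mu_x(B(y,r)) \leq C_2(x) r^{s'}$ for all $y \in \mathbb{R}^2$ and $r > 0$.

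The main technical point, and what I expect to be the principal obstacle, is arranging for $x \mapsto \mu_x$ to form a measurable family with a uniform Frostman constant. This can be handled by invoking a measurable selection theorem (for instance, Kuratowski--Ryll-Nardzewski applied to the set-valued map sending $x$ to the collection of admissible Frostman measures on $F \cap L_x$), combined with an exhaustion over the constant: the sublevel sets $\{x : C_2(x) \leq n\}$ are Borel and cover a set of full $\nu'$-measure, so restricting to one of positive $\nu'$-measure (and renaming $\nu'$) yields the desired uniform family, say with constant $C_2$.

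Finally, define the Borel measure $\mu$ on $F$ by
\[
\mu(A) = \int \mu_x(A \cap L_x) \, d\nu'(x),
\]
which has total mass $\nu'(\mathbb{R}) > 0$. For any ball $B = B((x_0,y_0), r) \subset \mathbb{R}^2$, the slice $B \cap L_x$ is empty unless $|x - x_0| \leq r$, and is otherwise a vertical disk of radius at most $r$, so $\mu_x(B) \leq C_2 r^{s'}$ on that range. Integrating this bound against $\nu'$ on $\{x : |x-x_0|\leq r\}$ gives $\mu(B) \leq C_1 C_2 r^{s' + t'}$, and the mass distribution principle yields $\dim_\H F \geq s' + t'$. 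An alternative route that bypasses measurable selection altogether is to estimate the $(s'+t')$-energy of the same product-type measure directly via Fubini, which sidesteps having to select the $\mu_x$ pointwise.
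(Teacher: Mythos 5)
Your construction is correct in its essentials, but it is a genuinely different route from the paper's, because the paper does not prove this lemma at all: it simply cites \cite[Corollary 7.12]{falconer} (the slice form of the product inequality), applied to the set $G=\{x:\dim_\H(F\cap L_x)\ge s\}$, which has full $\nu$-measure and hence satisfies $\dim_\H G\ge\dim_\H\nu$ by the very definition of the Hausdorff dimension of a measure. What you have written is, in effect, the standard proof of that cited result: a fibred measure $\mu=\int\mu_x\,d\nu'(x)$ whose Frostman exponents in the base and in the fibres add, followed by the mass distribution principle; the ball estimate $\mu(B)\le C_1C_2r^{s'+t'}$ and the limiting argument in $s'$ and $t'$ are fine. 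Three caveats. First, the step producing $\nu'$ is not Frostman's lemma (which concerns sets): what you actually need is the characterisation of $\dim_\H\nu$ as the $\nu$-essential supremum of the lower local dimension $\liminf_{r\to 0}\log\nu(B(x,r))/\log r$, which does yield a restriction $\nu'\ll\nu$ of positive mass with $\nu'(B(x,r))\le C_1r^{t'}$; the conclusion is right but the attribution is not. Second, Frostman's lemma on the slices requires $F\cap L_x$ to be Borel or analytic, whereas the lemma is stated for arbitrary $F\subseteq\mathbb{R}^2$; this is harmless in the paper's application since $F_{\ut}$ is compact. Third, the measurable-selection issue you identify is real, and your fix (a selection theorem plus exhaustion over the Frostman constant) can be made to work, but your proposed ``alternative route'' via energies does not in fact sidestep it, since the energy of $\int\mu_x\,d\nu'(x)$ still presupposes a measurable choice of the family $(\mu_x)$. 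A cleaner way to avoid selection entirely is the dual covering argument: for any cover $\{U_i\}$ of $F$ one has $\sum_{i:\,U_i\cap L_x\ne\emptyset}|U_i|^{s'}\ge\mathcal{H}^{s'}_\infty(F\cap L_x)$, and integrating this against $\nu'$ (restricted to a positive-measure set on which the content is bounded below) gives $\sum_i|U_i|^{s'+t'}\ge c>0$ directly, using only $\nu'(\pi(U_i))\le C_1|U_i|^{t'}$; this needs no measures on the slices at all.
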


We can now complete the proof of Theorem \ref{mainH}.

\begin{proof}
In McMullen's original proof, the calculation of the upper bound of the Hausdorff dimension of $F_{\underline{t}}$ is performed using covers by approximate squares on an appropriately defined symbolic space. Since these covers cover each symbolic `column' independently, the upper bound continues to hold when projecting to the actual fractal even in the presence of overlaps. Thus $\dim_{\text{H}}(F_{\underline{t}})\le s$ for all $t\in
[0,1-1/m]^{\ol{D}}$.

Now suppose $t_{i_1}=t_{i_2}$ for some distinct $i_1,i_2\in\ol{D}$, i.e. we have an exact column overlap. Symbolically, this corresponds to replacing two columns with $N_{i_1}, N_{i_2}$ rectangles by a single column with $N' \le N_{i_1}+N_{i_2}$ rectangles. Then in this case we get an upper bound
\[
\dim_{\text{H}}(F_{\underline{t}}) \le \frac{\log \left((N_{i_1}+N_{i_2})^{\log m/\log n}+\sum_{i\in\{1,\ldots,m\}\setminus\{i_1,i_2\}} N_i^{\log m/ \log n}\right)}{\log m} < s,
\]
using that $(N_{i_1}+N_{i_2})^{\gamma} < N_{i_1}^\gamma+N_{i_2}^\gamma$ for $\gamma\in(0,1)$. It follows that $\dim_{\text{H}}(E)\ge \lvert \ol{D}\rvert-1$.

We now deal with the lower bound for $\dim_{\text{H}}(F_{\underline{t}})$. The case $m=2$ is not very interesting, as the systems obtained for any $t_1\neq t_2$ are affinely conjugated to each other, and hence have the same dimensions as the original carpet. Hence from now on we assume $m\ge 3$.  In this situation the exceptional set $E$ in the theorem can be taken to be precisely $E_0$, where $E_0$ is the $(\lvert\ol{D}\rvert-1)$-dimensional set from Lemma \ref{keylem1}.  Fix $\underline{t}\in [0,1-1/m]^{\ol{D}}\setminus E_0$.  It is enough to show that $\dim_{\text{H}}(F_{\underline{t}}) \ge s$. Lemmas \ref{keylem1}-\ref{keylem2} and Marstrand's slice theorem (Lemma \ref{mst}) combine to yield
\begin{eqnarray*}
\dim_\text{H} F_{\underline{t}} & \geq & - \frac{\sum_{i \in \ol{D}} p_i \log p_i}{\log m} \ + \ \sum_{i \in \ol{D}} \, p_i \,  \frac{\log N_i}{\log n} \\ \\
& = & \sum_{i \in \ol{D}} p_i  \bigg( \frac{\log N_i}{\log n} \ - \ \frac{\log p_i}{\log m} \bigg) \\ \\
& = & \sum_{i \in \ol{D}} p_i  s \qquad \qquad \text{by the definition of $p_i$}\\ \\
& = & s
\end{eqnarray*}
as required.
\end{proof}

\section{Calculation of the box and packing dimensions} \label{proofB}

In this section we prove Theorem \ref{mainB} which gives the packing and box dimensions of $F_{\underline{t}}$ outside of a small set of exceptional $\underline{t}$.  This proof is rather more complicated, but perhaps less elegant, than the Hausdorff dimension case given in the previous section.  The reason for this is that we do not have a useful analogue of Marstrand's slice theorem and we do not have an analogue of the McMullen measure, i.e., a Bernoulli measure with full \emph{packing} dimension.

The box (and packing) dimension of $F_{\underline{t}}$ depends on three things: the dimension of the projection $\pi(F_{\underline{t}})$; the number of maps in the IFS; and how much `separation' there is in the construction.  In order to find $\underline{t}$ which give rise to maximal box dimension, these three things have to be controlled, and optimised, simultaneously.  Our strategy is somewhat involved and so we briefly describe it here before we begin the proof.   First we apply Hochman's results to control the dimension of $\pi(F_{\underline{t}})$.  For a fixed $\underline{t}$ which maximises $ \dim_\text{B} \pi(F_{\underline{t}})$, the defining IFS has the correct projection dimension and enough maps but not enough separation and so we need to `approximate it from within' by finding a subsystem which has almost enough maps and enough separation to give the desired result.  We could find a subsystem of the \emph{projected} IFS which gives the same projection dimension and guarantees separation, but this introduces a problem: there will be too few maps in the induced IFS on the square if the original system did not have uniform vertical fibres (the system is said to have uniform vertical fibers if the numbers $N_i=|\{ j: (i,j)\in D\}|$ are constant over $i\in \ol{D}$).  As such, we employ a technique similar to that used in \cite{ferguson_proj} by finding a subsystem of the IFS on the square which has almost the correct number of mappings, but uniform vertical fibres.  The issue now is that the projected dimension may be too small, but we can nevertheless find a subsystem of the projected IFS with the same (albeit too small) dimension which guarantees separation in the induced IFS on the square.  Instead of treating the induced subsystem as an IFS in its own right, we consider images of the original overlapping self-affine set by these maps.  This means that when we come to cover the, now disjoint, images, we are covering a subset of $F_{\underline{t}}$ with the correct projection dimension and, because we have uniform fibres, enough maps.

Throughout this section let
\[
\ol{s} \ = \ \frac{\log \lvert \ol{D} \rvert}{\log m} \qquad \text{and} \qquad s \  = \ \frac{\log \lvert \ol{D} \rvert}{\log m} + \frac{\log \lvert D \rvert / \lvert \ol{D} \rvert}{\log n}.
\]
In particular, $s$ is the target almost sure box dimension of $F_{\underline{t}}$ and $\ol{s}$ is the target almost sure box dimension of the relevant projection $F_{\underline{t}}$, which plays a key role.  We will prove Theorem \ref{mainB} in the box dimension case and note that, since each $F_{\underline{t}}$ is compact and has the property that every open ball centered in $F_{\underline{t}}$ contains a bi-Lipschitz image of $F_{\underline{t}}$, $\dim_\P F_{\underline{t}} = \overline{\dim}_\text{B} F_{\underline{t}}$ for all ${\underline{t}}$.  For more details on this useful alternative formulation of packing dimension, see \cite[Corollary 3.9]{falconer}.

We note the following consequence of Theorem \ref{thm:hochman} and Proposition \ref{rare-secc}.

\begin{lma} \label{boxlem1}
Fix $m\ge 3$. Let $E_0$ be the set of parameters $\underline{t}\in  [0,1-1/m]^{\ol{D}}$ such that the IFS $\{ \ol{S}_{i,\underline{t}} \}_{i\in \ol{D}}$ has super-exponential concentration of cylinders. Then $E_0$ has Hausdorff and packing dimension $|\ol{D}|-1$. Moreover, if $\underline{t}\in [0,1-1/m]^{\ol{D}}\setminus E_0$, then
\[
\dim_\text{\emph{H}} \pi(F_{\underline{t}}) \ = \ \dim_\text{\emph{B}} \pi(F_{\underline{t}}) \ = \ \frac{\log \lvert \ol{D} \rvert}{\log m} = \ol{s}
\]
Furthermore, if $\underline{t}$ is algebraic and the IFS $\{ \ol{S}_{i,\underline{t}} \}_{i\in \ol{D}}$ does not have an exact overlap, then $\underline{t}\notin E_0$.
\end{lma}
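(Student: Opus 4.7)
The plan is to obtain Lemma \ref{boxlem1} as a direct corollary of Theorem \ref{thm:hochman} and Proposition \ref{rare-secc} applied to the self-similar IFS $\{\ol{S}_{i,\underline{t}}\}_{i\in\ol{D}}$ on the line, which has common contraction ratio $1/m$ and attractor $\pi(F_{\underline{t}})$.

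First I would let $E_0$ be the set of $\underline{t}\in[0,1-1/m]^{\ol{D}}$ for which $\{\ol{S}_{i,\underline{t}}\}_{i\in\ol{D}}$ has super-exponential concentration of cylinders. Since $m\ge 3$, the contraction ratio $a=1/m$ lies in $(0,1/2)$, so Proposition \ref{rare-secc}(1) applies and gives $\dim_\H E_0 = \dim_\P E_0 = \lvert\ol{D}\rvert-1$. The algebraic statement at the end of the lemma follows immediately from Proposition \ref{rare-secc}(2), since $1/m$ is rational and hence algebraic.

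Next, for $\underline{t}\in[0,1-1/m]^{\ol{D}}\setminus E_0$, I would invoke the ``In particular'' part of Theorem \ref{thm:hochman} with the IFS $\{\ol{S}_{i,\underline{t}}\}_{i\in\ol{D}}$. Since $\ol{D}\subseteq\{1,\dots,m\}$, we have $\lvert\ol{D}\rvert\le m$, so $\log\lvert\ol{D}\rvert/\log m\le 1$, and the minimum in Hochman's formula is achieved by $\ol{s}$. This yields
\[
\dim_\H \pi(F_{\underline{t}}) \ = \ \frac{\log\lvert\ol{D}\rvert}{\log m} \ = \ \ol{s}.
\]

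Finally, to upgrade from Hausdorff to box dimension, I would use the elementary covering bound: the attractor of a self-similar IFS of $\lvert\ol{D}\rvert$ maps of contraction $1/m$ can be covered by $\lvert\ol{D}\rvert^k$ intervals of length $m^{-k}$, giving $\ol{\dim}_{\text{B}}\pi(F_{\underline{t}})\le\ol{s}$ for every $\underline{t}$. Combined with the general inequality $\dim_\H\le\underline{\dim}_{\text{B}}\le\ol{\dim}_{\text{B}}$ and the Hausdorff equality above, this forces $\dim_\H\pi(F_{\underline{t}}) = \dim_{\text{B}}\pi(F_{\underline{t}}) = \ol{s}$ for all $\underline{t}\notin E_0$, completing the proof. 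There is no substantive obstacle here; the lemma is essentially a packaging of Hochman's theorem and Proposition \ref{rare-secc} for the projected system, with the only mild point to verify being the applicability of Proposition \ref{rare-secc}(1), which requires $m\ge 3$ (exactly the hypothesis of the lemma).
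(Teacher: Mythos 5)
Your proof is correct and follows exactly the route the paper intends: the paper states Lemma \ref{boxlem1} without proof as an immediate consequence of Theorem \ref{thm:hochman} and Proposition \ref{rare-secc}, and your argument simply fills in the same details (Proposition \ref{rare-secc}(1) for the dimension of $E_0$, part (2) for the algebraic case, Hochman's formula for $\dim_\H\pi(F_{\underline{t}})$, and the standard bounds $\dim_\H\le\underline{\dim}_\text{B}\le\ol{\dim}_\text{B}\le\ol{s}$ to upgrade to box dimension). No gaps.
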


Let $N = \lvert D \rvert$,
\[
p= (1/m)^{\ol{s}} (1/n)^{s-\ol{s}} = 1/N
\]
and, for $k \in \mathbb{N}$, let
\[
\theta(k) = \sum_{i \in D} \lfloor p k \rfloor  = N \lfloor k/N \rfloor \in \mathbb{N}.
\]
Note that $k-N \leq \theta(k) \leq k$ for all $k \in \mathbb{N}$.  Consider $D^{\theta(k)}$ and let
\[
H_k = \Big\{ \lambda = (\lambda_1, \dots, \lambda_{\theta(k)}) \in D^{\theta(k)} : \text{for all }(i,j) \in D, \,  \lvert\{n \in \{1, \dots, \theta(k)\} : \lambda_n =(i,j)\} \rvert = \lfloor p k \rfloor \Big\}.
\]
It is straightforward to see that
\begin{equation} \label{combina}
\lvert H_k \rvert = \frac{\theta(k)!}{\prod_{i \in D} \lfloor pk \rfloor !} = \frac{\left( N \lfloor k/N \rfloor\right)!}{ \left( \lfloor k/N \rfloor! \right)^N}
\end{equation}
and the IFS $\{ S_{\lambda,\overline{t}}\}_{\lambda\in H_k}$ corresponding to $H_k$ has uniform vertical fibres.  Define $s_k$ by
\[
s_k = \frac{\log \lvert H_k \rvert }{k \log n } \ +  \ \ol{s}\bigg(1-\frac{\log m}{\log n } \bigg)
\]

\begin{lma} \label{skconverges}
We have $s_k \to s$ as $k \to \infty$.
\end{lma}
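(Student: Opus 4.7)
The plan is to reduce the limit to a standard application of Stirling's formula to the multinomial coefficient appearing in $|H_k|$.

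First I would simplify the target. Writing $s - \ol{s}\bigl(1 - \tfrac{\log m}{\log n}\bigr) = (s-\ol{s}) + \ol{s}\tfrac{\log m}{\log n}$ and using the definitions $s - \ol{s} = \tfrac{\log(N/|\ol{D}|)}{\log n}$ and $\ol{s} = \tfrac{\log|\ol{D}|}{\log m}$, the two terms combine to $\tfrac{\log N}{\log n}$. Hence $s_k \to s$ is equivalent to
\[
\frac{\log |H_k|}{k} \ \longrightarrow \ \log N \quad \text{as } k\to\infty.
\]

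Next I would apply Stirling's formula $\log(j!) = j\log j - j + O(\log j)$ to the expression
\[
|H_k| = \frac{(N\lfloor k/N\rfloor)!}{(\lfloor k/N\rfloor!)^N}
\]
from \eqref{combina}. Setting $q = \lfloor k/N\rfloor$, the numerator contributes $Nq\log(Nq) - Nq + O(\log k)$ and the denominator contributes $N(q\log q - q) + O(\log k)$, so after cancellation
\[
\log |H_k| \ = \ Nq\log N + O(\log k) \ = \ N\lfloor k/N\rfloor \log N + O(\log k).
\]

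Dividing by $k$ and using $k - N \le N\lfloor k/N\rfloor \le k$ gives $\tfrac{\log |H_k|}{k} = \log N + O(1/k) + O(\log k / k) \to \log N$, which completes the proof. There is no serious obstacle here: the statement is essentially a computation, and the only mild technicality is handling the floor function, which is absorbed by the $O(1/k)$ term coming from $N\lfloor k/N\rfloor/k \to 1$.
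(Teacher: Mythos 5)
Your proof is correct and follows essentially the same route as the paper: reduce the claim to $\log\lvert H_k\rvert/k\to\log N$ and apply Stirling's formula to the multinomial coefficient $\lvert H_k\rvert=(N\lfloor k/N\rfloor)!/(\lfloor k/N\rfloor!)^N$. The only cosmetic difference is that the paper gets the upper bound from the trivial estimate $\lvert H_k\rvert\le N^k$ and uses Stirling only for the lower bound, whereas your two-sided $O(\log k)$ error handles both directions at once.
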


\begin{proof}
We will use a version of Stirling's approximation for the logarithm of large factorials.  This states that for all $b \in \mathbb{N} \setminus \{1\}$ we have
\begin{equation} \label{stirling}
b \log b - b  \ \leq \ \log b! \ \leq \  b \log b - b +\log b.
\end{equation}
Note that
\[
s - \ol{s}\left(1-\frac{\log m}{\log n}\right) = \frac{\log N}{\log n}.
\]
Hence we need to show that
\[
 \frac{\log \lvert H_k \rvert }{k  } \to \log N \quad\text{as }k\to\infty.
\]
It is easy to see that $\lvert H_k \rvert \le N^k$ for all $k$. For the opposite inequality, we estimate, for large enough $k$,
\begin{align*}
\frac{\log \lvert H_k \rvert }{k} &= \frac{\log \theta(k)! - \sum_{i \in D} \log  \lfloor pk \rfloor!}{k}&\\
&\ge \frac{\theta(k)\log \theta(k) - \theta(k)  - \sum_{i \in D}\Big( \lfloor pk \rfloor \log  \lfloor pk \rfloor - \lfloor pk \rfloor + \log \lfloor pk \rfloor \Big)}{k}& \text{by }\eqref{stirling}\\
&= \frac{\theta(k)\log \theta(k) - \sum_{i \in D} \lfloor pk \rfloor \log  \lfloor pk \rfloor }{k}   -   \frac{ N \log \lfloor pk \rfloor }{k}&\\
&\ge \frac{\theta(k)\log \theta(k) - \log(pk)\sum_{i \in D} \lfloor pk \rfloor}{k}   -   \frac{ N \log \lfloor pk \rfloor }{k}&\\
&= \frac{\theta(k)\log \left(\theta(k)/(pk)\right)}{k}  -   \frac{ N \log \lfloor pk \rfloor }{k}&\\
&\ge \frac{k-N}{k}\log \left(\frac{k-N}{pk}\right)  -   \frac{ N \log \lfloor pk \rfloor }{k}\\
&\to \log N \quad\text{as }k\to\infty,
\end{align*}
which completes the proof.

\end{proof}

Let $\varepsilon \in (0, \ol{s})$ and fix $k \in \mathbb{N}$ large enough to guarantee that $s_k \geq s-\varepsilon$ which we can do by Lemma \ref{skconverges}.  Let
\[
\ol{H}_k =  \{ (i_1,\ldots,i_{\theta(k)}): \left((i_1,j_1),\ldots, (i_{\theta(k)},j_{\theta(k)})\right)\in H_k \text{ for some } j_1,\ldots,j_{\theta(k)}\},
\]
and consider the IFS of similarities $\mathcal{I}_k = \{  \ol{S}_{i,\underline{t}}\}_{i\in \ol{H}_k}$ associated to $\ol{H}_k$. Since the original projected IFS $\{ \ol{S}_{i,\underline{t}}\}_{i\in\ol{D}}$  had no super-exponential concentration of cylinders by assumption, neither does $\mathcal{I}_k$, and so, by Theorem \ref{thm:hochman}, the attractor has Hausdorff and box dimension equal to the new similarity dimension, given by
\begin{equation} \label{newsimdim}
\ol{s}_k = \frac{\log \lvert \ol{H}_k \rvert}{k \log m}.
\end{equation}
The following lemma is a version of a standard result which allows one to approximate the dimension of a self-similar set with overlaps by subsystems without overlaps.  Recall that an IFS $\{S_i\}_{i \in A'}$ with attractor $F$ satisfies the \emph{strong separation condition} (SSC) if the $S_{i}(F) \cap S_{i'}(F) = \emptyset$ for distinct $i, i' \in A'$.  If the SSC is satisfied it makes the IFS and corresponding attractor much easier to handle.

\begin{lma} \label{approxwithin}
Let $\{S_i\}_{i \in A}$ be an IFS of similarities on $[0,1]$, each with the same contraction ratio $a \in (0,1)$, and with self-similar attractor $F$ having Hausdorff and box dimension $t$ and let $\varepsilon>0$.  There exists $\ell_0 \in \mathbb{N}$ such that for all $\ell \geq \ell_0$ there exists a subsystem corresponding to a subset $A_{\ell} \subseteq A^\ell$ which satisfies the SSC and
\[
\lvert A_{\ell} \rvert \geq 3^{-t} a^{-\ell(t-\varepsilon)}.
\]
\end{lma}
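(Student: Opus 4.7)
The plan is to extract a well-separated subsystem of the level-$\ell$ IFS by a pigeonhole/grid argument, using the lower box-dimension estimate to guarantee enough chosen maps.

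First I would use $\dim_{\mathrm{B}} F = t$ to fix $\varepsilon' \in (0,\varepsilon)$ and note that $N_r(F) \ge r^{-(t-\varepsilon')}$ for every $r$ smaller than some $r_0$, where $N_r$ is the number of grid intervals of length $r$ meeting $F$ (using the interchangeability of the various forms of $N_r$ up to multiplicative constants, cf.\ \cite[Section 3.1]{falconer}). For a given $\ell$ I consider the iterated system $\{S_\rho\}_{\rho \in A^\ell}$, all of whose maps contract by $a^\ell$; since $F \subseteq [0,1]$, every cylinder $S_\rho(F)$ has diameter at most $a^\ell$, and $\bigcup_{\rho \in A^\ell} S_\rho(F) = F$.

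Next I would partition $[0,1]$ into consecutive grid intervals $\{I_j\}$ of length $3 a^\ell$. By the previous paragraph, the number of these intervals meeting $F$ is at least $(3 a^\ell)^{-(t-\varepsilon')}$ once $\ell$ is large. I then retain every third such intersecting interval, producing a subfamily $\mathcal{J}$ with $|\mathcal{J}| \ge \tfrac{1}{3}(3 a^\ell)^{-(t-\varepsilon')}$. For each $I_j \in \mathcal{J}$ I pick one index $\rho_j \in A^\ell$ such that $S_{\rho_j}(F) \cap I_j \ne \emptyset$, which exists because the cylinders cover $F$. Setting $A_\ell = \{\rho_j : I_j \in \mathcal{J}\}$, the disjointness of the cylinders $S_{\rho_j}(F)$ follows from a direct length comparison: each $S_{\rho_j}(F)$ lies in the $a^\ell$-neighbourhood of $I_j$ (an interval of length $5 a^\ell$ about the midpoint of $I_j$), while any two distinct intervals chosen in $\mathcal{J}$ have a gap of at least $2 \cdot 3 a^\ell = 6 a^\ell$ between them, so their $a^\ell$-neighbourhoods are disjoint. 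Hence $\{S_\rho(F)\}_{\rho \in A_\ell}$ satisfies the SSC.

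Finally I would compare the cardinality obtained, $|A_\ell| \ge \tfrac{1}{3} (3 a^\ell)^{-(t-\varepsilon')} = \tfrac{1}{3} \cdot 3^{-(t-\varepsilon')} a^{-\ell(t-\varepsilon')}$, with the target $3^{-t} a^{-\ell(t-\varepsilon)}$. The ratio is bounded below by a constant times $a^{-\ell(\varepsilon-\varepsilon')}$, which tends to $+\infty$ as $\ell \to \infty$ because $a<1$ and $\varepsilon>\varepsilon'$. Choosing $\ell_0$ so that this ratio exceeds $1$ for all $\ell \ge \ell_0$ gives the claimed lower bound. The only mildly delicate step is the bookkeeping of constants to reach precisely $3^{-t}$; everything else is routine once the grid-and-thin argument is in place, and there is no real obstacle because the slack $\varepsilon - \varepsilon'$ in the exponent absorbs all multiplicative losses for large $\ell$.
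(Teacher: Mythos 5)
Your argument is correct and follows essentially the same route as the paper: both proofs extract a pairwise-disjoint subfamily of level-$\ell$ cylinders whose cardinality is comparable to the covering number $N_{3a^\ell}(F)$, and then invoke the lower box-dimension bound $N_{3a^\ell}(F)\geq (3a^\ell)^{-(t-\varepsilon)}$. The only difference is cosmetic: the paper extracts the separated subfamily via the Vitali covering lemma (so that the tripled cylinders still cover $F$, giving $N_{3a^\ell}(F)\leq \lvert A_\ell\rvert$ directly with $\varepsilon'=\varepsilon$), whereas you use a grid-and-thinning pigeonhole and absorb the resulting constants using the slack $\varepsilon-\varepsilon'$ for large $\ell$.
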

Before proving this lemma, we note that the $\ell$ appearing in  $A_{\ell}$ merely indicates dependence on $\ell$, whereas the $\ell$ appearing in  $A^{\ell}$ indicates, as usual, that we consider words of length $\ell$ over $A$.

\begin{proof}
This follows easily from the Vitali covering lemma, which has been used to prove a similar result previously, see, for example, \cite{orponen}.  We include the details for completeness. Let $\ell \in \mathbb{N}$ and consider the set $A^l$ consisting of words of length $l$ over $A$ and the sets $\{S_i([0,1])\}_{i \in A^\ell}$. By the Vitali covering lemma, we can extract a subset $\{S_i([0,1])\}_{i \in A_{\ell}}$ for some $A_\ell \subseteq A^\ell$ such that
\[
F \subseteq \bigcup_{i \in A^\ell} S_i([0,1]) \subseteq   \bigcup_{i \in A_{\ell}} S_i([-1,2])
\]
and where the sets $\{ S_i([0,1])\}_{i \in A_{\ell}}$ are pairwise disjoint subsets of $[0,1]$, which means that the IFS induced by $A_{\ell}$ satisfies the SSC.  It follows that $N_{3a^\ell}(F) \leq \lvert A_{\ell} \rvert$.  Moreover, the definition of box dimension implies that for all $\varepsilon>0$ there exists $\ell_0 \in \mathbb{N}$ such that for all $\ell \geq \ell_0$,
\[
N_{3a^\ell}(F) \geq \big(3a^\ell\big)^{-(t-\varepsilon)}
\]
which completes the proof.
\end{proof}

We can now complete the proof of Theorem \ref{mainB}.

\begin{proof}
The upper bound $\overline{\dim}_{\text{B}} F_{\underline{t}}\le s$ holds for all $\underline{t}\in [1-1/m]^{\ol{D}}$; this follows from \cite[Theorem 2.4]{fraserboxlike} which gave an upper bound for the upper box dimension of a class of self-affine carpets (which contains all of the sets $F_{\underline{t}}$) in terms of the box dimensions of the orthogonal projections without any separation conditions. For completeness, we sketch the argument in this case. Since $F_{\underline{t}} = \cup_{\lambda\in D^{\ell}} S_{\lambda,\underline{t}} F_{\underline{t}}$, we have
\[
N_r(F_{\underline{t}}) \le \sum_{\lambda\in D^{\ell}} N_r(S_{\lambda,\underline{t}} F_{\underline{t}}).
\]
Let $r=(1/n)^\ell$. Since $S_{\lambda,\underline{t}}$ maps the unit square to a rectangle of size $(1/m)^{\ell}\times r$, it follows that
\[
N_r(F_{\underline{t}})  \le C |D|^{\ell} N_{r m^{\ell}}(\pi F_{\underline{t}})
\]
where $C>0$ is a constant which does not depend on $r,l,m$ or $n$.  But $\pi F_{\underline{t}}$ is a self-similar set with similarity dimension $\ol{s}=\log|\ol{D}|/\log m$. As the upper box counting dimension is bounded above by the similarity dimension, for any $\varepsilon>0$ there is $C_\varepsilon>0$ such that $N_{\rho}(\pi F_{\underline{t}}) \le C_{\varepsilon} \rho^{\ol{s}-\varepsilon}$ for all $\rho>0$. Applying this with $\rho= r m^{\ell}$ and putting all estimates together yields the desired upper bound.

Now, as in the proof of Theorem \ref{mainH}, pick $\underline{t}$ such that $t_{i_1}=t_{i_2}$ for some distinct $i_1,i_2\in \ol{D}$. This merges two columns and does not increase the total number of rectangles, so applying the upper bound to the resulting system we get
\[
\overline{\dim}_{\text{B}} F_{\underline{t}} \le \frac{\log( \lvert \ol{D} \rvert-1)}{\log m} + \frac{\log \lvert D \rvert / (\lvert \ol{D} \rvert-1)}{\log n} < s,
\]
since $m<n$. It follows that $\dim_\H E\ge \lvert \ol{D} \rvert -1$.

As in the proof of Theorem \ref{mainH}, the lower bound in the case $m=2$ is straightforward, so we assume that $m\ge 3$. Again, in this setting the exceptional set $E$ in the theorem can be taken to be precisely $E_0$, where $E_0$ is the $(\lvert\ol{D}\rvert-1)$-dimensional set from Lemma \ref{boxlem1}. Fix a $\underline{t} \in [0,1-1/m]^{\ol{D}}\setminus E_0$.  For this $\underline{t}$ we will prove that the lower box dimension is at least $s$, which completes the proof.

We will apply Lemma \ref{approxwithin} to the IFS of similarities $\mathcal{I}_k$ corresponding to $\ol{H}_k$.  In particular, there exists $\ell_0 \in \mathbb{N}$ such that for all $\ell \geq \ell_0$  we may find a subset
\[
\ol{G}_{k,\ell}\subseteq \ol{H}_k^\ell
\]
such that the system $\{ \ol{S}_{i,\underline{t}} \}_{i\in \ol{G}_{k,\ell}}$ corresponding to $\ol{G}_{k,\ell}$ satisfies the SSC, and
\begin{equation} \label{sizeofG1}
\lvert \ol{G}_{k,\ell}\rvert \geq 3^{-\ol{s}_k} (1/m)^{-k\ell(\ol{s}_k-\varepsilon)} = 3^{-\ol{s}_k} (1/m)^{k\ell\varepsilon} \lvert \ol{H}_k \rvert^{\ell}
\end{equation}
by (\ref{newsimdim}).  Fix such an $\ell \geq \ell_0$ and consider the set
\[
G_{k,\ell} = \{ ((i_1,j_1),\ldots,(i_{k\ell},j_{k\ell})) \in D^{k\ell} :(i_1,\ldots,i_{k\ell}) \in \ol{G}_{k,\ell}\}
\]
and observe that, since $H_k$ had uniform vertical fibres,
\begin{equation}
\lvert G_{k,\ell} \rvert = \bigg( \frac{\lvert H_k \rvert}{\lvert \ol{H}_k \rvert}\bigg)^\ell \, \lvert \ol{G}_{k,\ell}\rvert
\geq \lvert H_k \rvert^\ell \,3^{-\ol{s}_k} (1/m)^{k\ell\varepsilon} \label{Gest}
\end{equation}
by (\ref{sizeofG1}).  Let $r = (1/n)^{k\ell}$ and consider the set
\[
F_0:= \ \bigcup_{\lambda \in G_{k,\ell}} S_{\lambda, \underline{t}}(F_{\underline{t}}) \ \subseteq F_{\underline{t}}.
\]
(Note that $F_0$ depends on $k, \ell$ and $\underline{t}$, but we do not display this dependence.) We will adopt the $\rho$-grid definition of $N_\rho( \cdot)$.   It follows immediately from the definition of box dimension that there exists a constant $C_\varepsilon >0$ depending only on $\varepsilon$ such that for all $\rho \in (0, 1]$ we have
\begin{equation} \label{simplebox}
  N_\rho\big(\pi(F_{\underline{t}})\big)\geq C_\varepsilon \, \rho^{-(\ol{s} - \varepsilon)}.
\end{equation}
Notice that each set $S_{\lambda, \underline{t}}(F)$ in the composition of $F_0$ is contained in the rectangle $S_{\lambda, \underline{t}}\big([0,1]^2\big)$ which has height $r$ and base length $(1/m)^{k\ell}$.  It follows that
\begin{equation} \label{projest}
N_r(S_{i,\underline{t}}(F_{\underline{t}})) \ \geq  \ N_{r (1/m)^{-k\ell}}\big(\pi(F_{\underline{t}})\big) \ \geq \  C_\varepsilon \Bigg(\frac{(1/m)^{k\ell}}{r}\Bigg)^{\ol{s}-\varepsilon}
\end{equation}
by (\ref{simplebox}).  Let $U$ be any closed square of sidelength $r$.  Since $\{S_{\lambda,\underline{t}}\big([0,1]^2\big)\}_{\lambda \in G_{k,\ell}}$ is a collection of rectangles which can only intersect at the boundaries  each with shortest side having length $r$, it is clear that $U$ can intersect no more than 9 of the sets $\{ S_{\lambda,\underline{t}}(F_{\underline{t}})\}_{\lambda \in G_{k,\ell}}$.  It follows that
\[
\sum_{\lambda \in G_{k,\ell}} N_{r} \big(S_{\lambda,\underline{t}}(F_{\underline{t}})\big) \  \leq  \ 9 \, N_{r} \Bigg(\bigcup_{\lambda\in G_{k,\ell}} S_{\lambda,\underline{t}}(F_{\underline{t}}) \Bigg)  \ \leq \  9 \, N_{r}(F_{\underline{t}}).
\]
This yields
\begin{align*}
N_{r}(F_{\underline{t}})& \geq  \tfrac{1}{9} \  \sum_{\lambda \in G_{k,\ell}} N_{r} \big(S_{i,\underline{t}}(F)\big) \\
& \geq  \tfrac{1}{9} \  \lvert G_{k,\ell} \rvert  C_\varepsilon \Bigg(\frac{(1/m)^{k\ell}}{r}\Bigg)^{\ol{s}-\varepsilon}  & \text{by (\ref{projest})}\\
& \geq  \tfrac{C_\varepsilon}{9} \,3^{-\ol{s}_k} r^{-(s_k-\varepsilon)} \  \lvert H_k \rvert^\ell \ \Big( (1/m)^{\ol{s}} (1/n)^{s_k-\ol{s}}\Big)^{k\ell} & \text{by (\ref{Gest})}\\
& \geq  \tfrac{C_\varepsilon}{27} \, r^{-(s_k-\varepsilon)}  \ \Bigg( \lvert H_k \rvert  \ (1/m)^{k \ol{s}} (1/n)^{k(s_k-\ol{s})}\Bigg)^{\ell} \\
& =  \tfrac{C_\varepsilon}{27} \, r^{-(s_k-\varepsilon)}
\end{align*}
by the definition of $s_k$.  This is valid for all $\ell \geq \ell_0$ and hence
\[
\liminf_{\ell \to \infty} \ \frac{\log N_{(1/n)^{k\ell}}(F_{\underline{t}})}{-\log (1/n)^{k\ell} } \  \geq \  s_k-\varepsilon \  \geq \  s- 2 \varepsilon.
\]
Fortunately, letting $r$ tend to zero through the sequence $(1/n)^{k\ell}$ as $\ell \to \infty$ is sufficient to give a lower bound on the \emph{lower} box dimension of $ F_{\underline{t}}$, see \cite[Section 3.1]{falconer} and so, since $\varepsilon$ can be made arbitrarily small, this yields $\underline{\dim}_\text{B}F_{\underline{t}} \geq s$ as required.
\end{proof}

\section{Generalisations, remarks, and future work}

\subsection{A generalisation}
\label{subsecgeneralisation}

We note that the fact we used reciprocals of integers $1/m$ and $1/n$ as the principle contractions in the defining system was not important.  We could equally well have chosen arbitrary $a,b \in (0,1/2]$ with $a>b$.  Moreover, we could allow different arrangements of the $a \times b$ rectangles in each fixed column provided they do not overlap, i.e. they need not be integer multiples of $b$ apart. See Figure \ref{fig:generalpattern} for an example of a pattern of this more general type. Thus, we have the following result.

\begin{figure}[H]
	\centering
	\includegraphics[width=80mm]{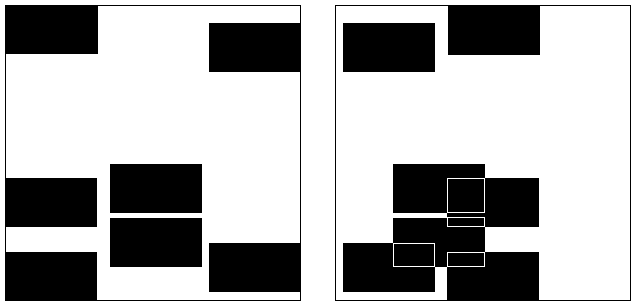}
\caption{A more general column pattern to which our results apply, and a concrete realization in which the columns overlap.} \label{fig:generalpattern}
\end{figure}

\begin{thm} \label{generalisation}
Let $0<b<a \leq 1/2$. Suppose there are numbers $\{ w_{ij} \}_{1\le i\le m, 1\le j\le N_i}$ for some integers $2\le m\le 1/a, N_i\ge 1$ such that $0\le w_{ij}\le 1-b$ and $|w_{i j_1}-w_{i j_2}|\ge b$ for all $i,j, j_1\neq j_2$.

Given $\underline{t}\in\mathbb{R}^m$, let $F_{\underline{t}}$ be the attractor of the IFS $\{ S_{(i,j),\underline{t}}\}_{1\le i\le m, 1\le j\le N_i}$, where
\[
S_{(i,j),\underline{t}}(x,y) = (ax ,by) + (t_i, w_{ij}).
\]
Then for all $\underline{t}\in [0,1-a]^m$ such that the IFS $\{ a x+t_i\}_{i=1}^m$ does not have super exponential concentration of cylinders, we have
\begin{align*}
\dim_{\text{\emph{H}}}(F_{\underline{t}}) &= \frac{\log \sum_{i=1}^m N_i^{\log a/ \log b}}{\log(1/a)}, \\
\dim_{\text{\emph{B}}}(F_{\underline{t}}) &= \frac{\log m}{\log(1/a)} + \frac{\log(\sum_{i=1}^m N_i/m)}{\log(1/b)}.
\end{align*}
In particular, this holds for all $\underline{t}$ outside of an exceptional set $E$ (depending only on $a$) of Hausdorff and packing dimension $m-1$.

Moreover, if $a$ is algebraic, then it also holds for all algebraic $\underline{t}$ such that the IFS $\{ a x+t_i\}_{i=1}^m$ does not have an exact overlap.
\end{thm}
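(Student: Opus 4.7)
The plan is to show that the proofs of Theorems \ref{mainH} and \ref{mainB} carry over to this more general setting with only notational modifications. Throughout, the three structural ingredients those proofs relied on remain available: (i) the projected IFS $\{ax+t_i\}_{i=1}^m$ is homogeneous self-similar, so Hochman's Theorem \ref{thm:hochman} applies directly; (ii) within each column $i$, the vertical maps $\{y\mapsto by+w_{ij}\}_{j=1}^{N_i}$ form an IFS of similarities with common contraction ratio $b$, and the hypothesis $|w_{ij_1}-w_{ij_2}|\ge b$ for $j_1\neq j_2$ guarantees that these satisfy the strong separation condition; and (iii) the slicing and covering arguments only use disjointness of within-column rectangles, not a grid alignment. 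The statement about the exceptional set $E$ is then immediate from Proposition \ref{rare-secc}, exactly as in the proofs of Theorems \ref{mainH} and \ref{mainB}.

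For the Hausdorff dimension, I would define the analogue of the McMullen measure as the Bernoulli measure on $D^{\mathbb{N}}$, where $D=\{(i,j):1\le i\le m,\, 1\le j\le N_i\}$, with weights
\[
p_{(i,j)}=N_i^{\log a/\log b -1}\,a^s, \qquad p_i=N_i^{\log a/\log b}\,a^s,
\]
where $s=\log(\sum_i N_i^{\log a/\log b})/\log(1/a)$ is the target dimension. Lemma \ref{keylem1} then holds verbatim, with $\log m$ replaced by $\log(1/a)$, since its proof only invokes Hochman's theorem for the projected IFS. Lemma \ref{keylem2} generalises because, by (ii) above, each vertical fibre $L_x\cap F_{\underline{t}}$ contains a $1$-variable random self-similar set whose constituent IFSs all satisfy the SSC, giving almost sure fibre dimension $\sum_i p_i\log N_i/\log(1/b)$ by the same computation from \cite{vvariable}. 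Marstrand's slice theorem (Lemma \ref{mst}) then yields
\[
\dim_\text{H} F_{\underline{t}} \ \ge \ -\frac{\sum_i p_i\log p_i}{\log(1/a)} + \sum_i p_i\,\frac{\log N_i}{\log(1/b)} \ = \ s,
\]
where the final equality uses the identity $\log N_i/\log(1/b)-\log p_i/\log(1/a)=s$, which is a direct substitution from the definition of $p_i$. The matching upper bound is obtained from approximate-square covers exactly as in the Bedford--McMullen case and requires no separation.

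For the box (and packing) dimension, the plan is to reproduce the proof of Theorem \ref{mainB} with $\overline{s}=\log m/\log(1/a)$, $s=\overline{s}+\log(N/m)/\log(1/b)$ where $N=\sum_i N_i$, and $p=a^{\overline{s}}b^{s-\overline{s}}=1/N$. The set $H_k\subset D^{\theta(k)}$ of words in which each element of $D$ appears exactly $\lfloor k/N\rfloor$ times is defined as before, and Lemma \ref{skconverges} (pure Stirling) goes through unchanged. Applying Lemma \ref{approxwithin} to the projected IFS indexed by $\overline{H}_k$, whose absence of super-exponential concentration is inherited from $\{\overline{S}_{i,\underline{t}}\}_{i=1}^m$, yields strongly separated subsystems $\overline{G}_{k,\ell}$, and the covering argument then runs identically. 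The only step that requires genuinely new attention, and which I expect to be the main obstacle, is the cosmetic replacement of the constant ``$9$'' from the proof of Theorem \ref{mainB}, which bounded the number of height-$r$ rectangles in a single column meeting an $r$-square: in the generalised setting this must be replaced by a universal constant depending only on $\lceil a/b\rceil$, and the hypothesis $|w_{ij_1}-w_{ij_2}|\ge b$ is precisely what keeps this constant finite. Since this constant does not affect the exponents, the identical computation yields $\underline{\dim}_\text{B}F_{\underline{t}}\ge s$, and the upper bound $\overline{\dim}_\text{B}F_{\underline{t}}\le s$ is obtained by an unchanged adaptation of the argument from \cite[Theorem 2.4]{fraserboxlike}.
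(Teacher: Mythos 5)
Your proposal is correct and follows exactly the route the paper intends: the paper offers no separate proof of Theorem \ref{generalisation}, merely observing that the arguments for Theorems \ref{mainH} and \ref{mainB} carry over once $1/m,1/n$ are replaced by $a,b$ and the within-column separation $|w_{ij_1}-w_{ij_2}|\ge b$ is used in place of grid alignment, and your verification of the modified weights, the identity $\log N_i/\log(1/b)-\log p_i/\log(1/a)=s$, and the bounded-multiplicity constant in the covering step fills in precisely those details. (The only quibble is that the multiplicity constant need not depend on $\lceil a/b\rceil$ at all, since the level-$k\ell$ rectangles in a column have height exactly $r=b^{k\ell}$ with disjoint interiors; but any fixed constant suffices, so this does not affect the argument.)
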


We make some remarks on the assumptions of the above theorem. The restrictions $w_{ij}\in [0,1-b]$ and $\underline{t}\in [0,1-a]$ are not essential; they simply make sure that $F_{\ut}$ is a subset of the unit square, which can always be achieved by a change of coordinates.  The hypothesis $|w_{i j_1}-w_{i j_2}|\ge b$ guarantees that the rectangles in each column are non-overlapping, and this is an obvious necessary condition in general. The assumption $m\le 1/a$ is meant to ensure that the similarity dimension of the projected self-similar set (and measure) is at most $1$, and we require $a\le 1/2$ so that $m\le 1/a$ is not a vacuous assumption. For the box dimension calculation, these are not essential restrictions: if $m> 1/a$ and $a\in (0,1)$ is arbitrary, the proof goes through just by replacing $\log m/\log(1/a)$ by $1$ at the points where the dimension of the projection comes up (in the proofs of both the lower and upper bound), to give the same result with the formula for the box dimension replaced by
\[
1+\frac{\log(a\sum_{i=1}^m N_i)}{\log(1/b)}.
\]
For Hausdorff dimension, however, the result fails if $a>1/2$. Recall that a \emph{Pisot number} is an algebraic integer $>1$ such that all its algebraic conjugates are $<1$ in modulus. It is well known that Pisot numbers accumulate to $2$. Take $m=2$, $1/a$ to be any Pisot number in $(1,2)$, any $b\in (0,a)$ and $N_1=N_2=1$. Note that the translations do not play a role when $m=2$ (as long as the maps do not have the same fixed point, which is a co-dimension one phenomenon in parameter space). In this case it was shown by Przytycki and Urba\'{n}ski in \cite{prurb} that the Hausdorff dimension drops from the ``expected'' value, see also \cite[Theorem 15]{shmerkinoverlapping} for a simpler proof using McMullen's method from \cite{mcmullen}. In fact, the latter proof shows that the same phenomenon holds if $1/a\in (1,2)$ is Pisot, for any $m\ge 2$ and any $N_i$. We note that the issue here is that $a>1/2$; it would be interesting to understand the behaviour of Hausdorff dimension when $m>1/a$ but $a <1/2$.

\subsection{Final remarks}

There are various other directions in which this work could  move.  A  further generalisation in the direction of Theorem \ref{generalisation} would be to consider Lalley-Gatzouras type columns \cite{lalley-gatz}, which would allow for rectangles of varying heights and widths.  We do not see any difficulty in extending our arguments to cover this setting, but do not pursue it here to aid clarity of exposition. One could also consider random versions of the more general self-affine carpets considered by Bara\'nski \cite{baranski}, Feng-Wang \cite{fengaffine} or Fraser \cite{fraserboxlike}, however, in these cases our random model seems less natural as the dimension can depend on both principal projections, rather than just $\pi$.

In this article we have focused on the case with ``column alignment'' where the dimension drops from the affinity dimension, but Theorems \ref{mainH}, \ref{mainB} and \ref{generalisation} hold also when each column has just one rectangle, i.e. there are no special alignments, and in this case the dimension formulas we obtain coincide with the affinity dimension of the respective systems. Once one gives up the alignment, it makes sense to consider arbitrary self-affine systems, including those for which there is no dominant direction for all maps. We hope to address this situation in a forthcoming paper, leading to an improvement on Falconer's classical theorem from \cite{affine} in the case of diagonal maps.

Another interesting direction for further work would be to consider self-affine measures supported on our carpets.  Then one could ask if, for example, the Hausdorff dimension or $L^q$-spectrum was almost surely equal to the  Hausdorff dimension or $L^q$-spectrum when the columns do not overlap. For the Hausdorff dimension of self-affine measures, the proof of Theorem \ref{mainH} should apply with minor changes to yield an analogous result. On the other hand, $L^q$-spectra behave more like box counting dimension, and our methods clearly do not work here as we heavily relied on taking subsystems, which does not work for measures, but only for sets.

One could also look at different notions of dimension other than just the Hausdorff, packing and box dimensions considered here.  For example, the Assouad dimension $\dim_\text{A}$, and its natural dual the lower dimension $\dim_\text{L}$, have recently been gaining some attention in the literature on fractals and in particular overlapping self-similar sets \cite{fraseretal} and self-affine carpets \cite{mackay, fraserassouad}.  The definitions of these dimensions are quite technical and so we do not give them here, but rather refer the reader to the papers \cite{larman, fraserassouad}.  One of the key properties of our construction is that the box and Hausdorff dimensions can never be larger than the box and Hausdorff dimensions of the original Bedford-McMullen carpet. We conclude this section by briefly pointing out via two simple examples that this is not the case for Assouad and lower dimension.  This is based on the recent work of Mackay \cite{mackay} and Fraser \cite{fraserassouad} who computed these dimensions for certain classes of self-affine carpets.

\begin{thm}[Fraser, Mackay] \label{frasermackay}
Let $F$ be a standard Bedford-McMullen carpet.  Then
\[
\dim_\text{\emph{A}} F \ = \  \frac{\log \lvert \ol{D} \rvert}{\log m} \, + \, \max_{i=1, \dots, m} \frac{\log N_i}{\log n}
\]
and
\[
\dim_\text{\emph{L}} F \ = \  \frac{\log \lvert \ol{D} \rvert}{\log m} \, + \, \min_{i=1, \dots, m} \frac{\log N_i}{\log n}.
\]
\end{thm}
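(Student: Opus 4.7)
The statement splits naturally into four inequalities (upper and lower bounds for $\dim_\text{A}$ and $\dim_\text{L}$), and all four rest on the same structural picture of a Bedford--McMullen carpet: at fine scales the ``vertical complexity'' in each column $i \in \ol{D}$ is governed by $\log N_i/\log n$, while the ``horizontal complexity'' is uniformly $\log|\ol{D}|/\log m$. The Assouad dimension realises the \emph{worst} local combination and the lower dimension realises the \emph{best}, which accounts for the $\max_i$ vs.\ $\min_i$ in the two formulas.

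For the Assouad upper bound, my plan is to use the standard approximate-square decomposition. Given $0<r<R\le 1$, choose integers $k,K$ with $R\asymp n^{-k}$ and $r\asymp n^{-K}$, and set $L=\lceil k\log n/\log m\rceil$, $L'=\lceil K\log n/\log m\rceil$. Any ball $B(x,R)\cap F$ is contained in $O(1)$ approximate squares of scale $n^{-k}$, each specified by fixing an initial horizontal path $(i_1,\ldots,i_L)\in\ol{D}^L$ and the vertical path up to level $k$. Inside one such approximate square, covering by approximate squares of scale $n^{-K}$ amounts to choosing an extended horizontal path $(i_{L+1},\ldots,i_{L'})$ (giving at most $|\ol{D}|^{L'-L}$ options) and, for each of the steps $l=k+1,\ldots,K$, choosing one of at most $N_{i_l}\le\max_i N_i$ compatible vertical symbols. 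Multiplying and using $L'-L\approx (K-k)\log n/\log m$ yields a bound of the form $C(R/r)^s$ with $s$ the conjectured value.

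For the Assouad lower bound, I would exhibit a worst-case configuration. Fix $i^{*}=\arg\max_i N_i$ and take $x\in F$ coded by a sequence whose horizontal path eventually sits on the periodic word $\overline{i^{*}}$; then the vertical fibre of $F$ above $x$ contains a Cantor-like set with branching $N_{i^*}$ at vertical contraction $1/n$, and the horizontal projection $\pi F$ has full dimension $\log|\ol{D}|/\log m$. Choosing $R,r$ along the sequence $n^{-k}$ with $r\ll R$ and counting directly inside the approximate square through $x$, one finds at least $N_{i^*}^{K-k}\cdot|\ol{D}|^{L'-L}$ disjoint cylinders of size $r$, which reverses the upper bound up to constants. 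The dual argument gives $\dim_\text{L}F\ge \log|\ol{D}|/\log m+\min_i\log N_i/\log n$ by showing that \emph{every} approximate square contains a vertical trajectory passing through some minimum-complexity column, and that every ball hosts at least one such approximate square near its centre.

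The main obstacle is the two-scale nature of the carpet: because $1/m$ and $1/n$ differ, one cannot use genuine cylinders as a cover and must track approximate squares, which requires some combinatorial bookkeeping to align the horizontal depth $L$ with the vertical depth $k$ (and similarly $L'$ with $K$). For the lower dimension in particular, the delicate point is that one must produce the extremal configuration inside \emph{every} ball centred in $F$, not just in a cleverly chosen one; this is handled by exploiting the fact that, thanks to the column structure, every approximate square necessarily contains an extension through \textit{some} column, and the worst such column still has at least $\min_i N_i$ rectangles per vertical step. Once these two dimension-one estimates are in place, combining them via the product-like decomposition of approximate squares delivers all four bounds simultaneously.
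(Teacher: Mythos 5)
This theorem is not proved in the paper: it is imported verbatim from the cited works of Mackay \cite{mackay} and Fraser \cite{fraserassouad}, so there is no in-paper argument to compare against. Your sketch follows the standard route of those references (approximate squares, with the extremal column governing the vertical branching), and the Assouad upper bound and the lower bound for $\dim_\text{L}$ are argued correctly in outline: the count of scale-$n^{-K}$ approximate squares inside a scale-$n^{-k}$ one is $\prod_{l=k+1}^{K}N_{i_l}\cdot\lvert\ol{D}\rvert^{L'-L}$, which is bounded above by $(\max_iN_i)^{K-k}\lvert\ol{D}\rvert^{L'-L}\asymp (R/r)^{s}$ and below by the same expression with $\min_{i\in\ol{D}}N_i$.

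Two genuine gaps remain. First, you only argue three of the four inequalities: the \emph{upper} bound for $\dim_\text{L}$ (exhibiting a point and scales where the count is small) is never addressed, and it is the delicate one, because even if $x$ is chosen with horizontal coding $\ol{i_*}$ for $i_*=\arg\min N_i$, the ball $B(x,R)$ meets $O(1)$ \emph{neighbouring} approximate squares whose column sequences may pass through rich columns, inflating $N_r(B(x,R)\cap F)$ well above $(\min_iN_i)^{K-k}\lvert\ol{D}\rvert^{L'-L}$; one must either choose $x_j$, $R_j$ so that $B(x_j,R_j)\cap F$ stays inside a single all-$i_*$ approximate square, or pass to a weak-tangent formulation. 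Second, in the Assouad lower bound the phrase ``$r\ll R$'' is too loose: the column indices $i_{k+1},\dots,i_K$ are only pinned to $i^*$ by the approximate square for $l\le L=k\log n/\log m$, and if $K>L$ the count inside the approximate square through $x$ degrades (for non-uniform fibres it interpolates down towards the box-dimension exponent). One must take $K\le k\log n/\log m$, which still allows $K-k\to\infty$ and hence suffices for the Assouad dimension, but the restriction has to be imposed; the same caveat applies to the missing $\dim_\text{L}$ upper bound.
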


We will now use this theorem to provide examples showing that the Assouad and lower dimension can increase from the original values upon translation of columns.  The iterated function systems and their attractors will be given in the following figures.  In all cases we choose $m=3$ and $n=4$.

\begin{figure}[H]
	\centering
	\includegraphics[width=80mm]{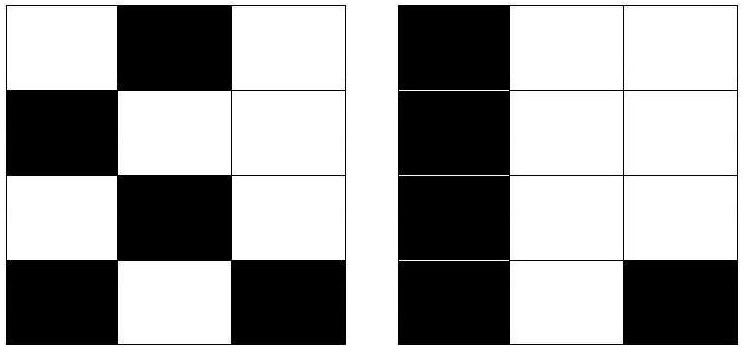}
\caption{Two Bedford-McMullen IFSs with attractors $F_1$ (left) and $F_2$ (right).  Note that we can translate the columns in the carpet on the left to obtain the carpet on the right.}
\end{figure}

By Theorem \ref{frasermackay}, we have
\[
\dim_\text{A} F_1 \ = \  1 \, + \, \frac{\log 2}{\log 4} \ < \ \frac{\log 2}{\log 3} \, + \, 1 \ = \ \dim_\text{A} F_2.
\]

\begin{figure}[H]
	\centering
	\includegraphics[width=80mm]{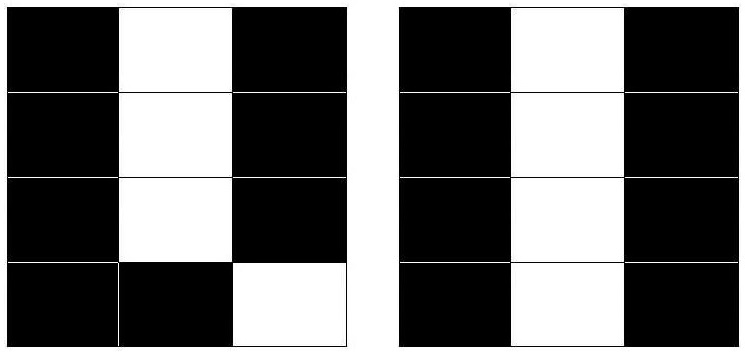}
\caption{Two Bedford-McMullen IFSs with attractors $F_3$ (left) and $F_4$ (right).  Note that we can translate the columns in the carpet on the left to obtain the carpet on the right.}
\end{figure}

By Theorem \ref{frasermackay}, we have
\[
\dim_\text{L} F_3 \ = \  1  \ < \ \frac{\log 2}{\log 3} \, + \, 1 \ = \ \dim_\text{L} F_4.
\]
Throughout this paper we relied on being able to understand the dimension of the projection onto the first coordinate, which is a self-similar subset of the unit interval, typically with overlaps.  The Assouad dimension and lower dimension also depend on this, however, the Assouad dimension of a self-similar subset of the unit interval with overlaps does not necessarily equal the Hausdorff dimension.  In \cite{fraseretal}, it was recently shown that  in the cases when the Assouad dimension is strictly greater than the Hausdorff dimension, then it is automatically equal to 1, no matter how small the Hausdorff dimension is.  The lower dimension, on the other hand, always coincides with the Hausdorff dimension, see \cite[Theorem 2.11]{fraserassouad}.

\section*{Acknowledgements}

This work began whilst P.S. was visiting J.M.F. at the University of St Andrews.  The work of J.M.F. was supported by the EPSRC grant EP/J013560/1 whilst at Warwick and an EPSRC doctoral training grant whilst at St Andrews. P.S. acknowledges support from Project PICT 2011-0436 (ANPCyT).


\end{document}